\newtheorem{lemma}{Lemma}
\newtheorem{theorem}{Theorem}
\title{Superlinear Convergence of GMRES for clustered eigenvalues and its application to least squares problems
}
\author{
   Zeyu LIAO \\
  Joint MSU-BIT-SMBU Research Center of Applied Mathematics \\
  Shenzhen MSU-BIT University \\
  Shenzhen\\
  \texttt{zeyuliao@smbu.edu.cn} \\
   \And
  Ken HAYAMI \\
  Professor Emeritus, National Institute of Informatics \\
  and The Graduate University for Advanced Studies 
(SOKENDAI) \\
  Tokyo\\
  \texttt{hayami@nii.ac.jp} \\
}
\begin{document}
\maketitle

\begin{abstract}
The objective of this paper is to understand the superlinear convergence behavior of the GMRES method when the coefficient matrix has clustered eigenvalues. In order to understand the phenomenon, we analyze the convergence using the Vandermonde matrix which is defined using the eigenvalues of the coefficient matrix. Although eigenvalues alone cannot explain the convergence, they may provide an upper bound of the residual, together with the right hand side vector and the eigenvectors of the coefficient matrix. We show that when the coefficient matrix is diagonalizable, 
if the eigenvalues of the coefficient matrix are clustered, the upper bound of the convergence curve shows superlinear convergence, when the norm of the matrix obtained by decomposing the right hand side vector into the eigenvector components is not so large. We apply the analysis to explain the convergence of inner-iteration preconditioned GMRES for least squares problems.
\end{abstract}

\keywords{Krylov subspace methods, GMRES, superlinear convergence, Vandermonde matrix, Least squares problems, Inner-iteration preconditioning}

\section{Introduction}\label{sec1}
The generalized minimal residual method (GMRES) \cite{saad1986}, is a robust iterative method for the numerical solution of nonsymmetric square systems of linear equations. GMRES is a generalization of the MINRES method \cite{paige1975} which was developed for symmetric systems.\par
GMRES generates a Krylov subspace, and finds the solution in the Krylov subspace by minimizing the 2-norm of the residual. Consider the problem with square coefficient matrix
\begin{equation*}
    A\boldsymbol{x}=\boldsymbol{b},\qquad A\in \mathbb{R}^{n\times n},\qquad \boldsymbol{b}\in \mathbb{R}^{n}.
\end{equation*}
 Let $\boldsymbol{x_0}$ be the initial iterate. Denote the residual by $\boldsymbol{r}=\boldsymbol{b}-A\boldsymbol{x}$. Then, the initial residual is given by 
$\boldsymbol{{r}_0}=\boldsymbol{b}-A\boldsymbol{x_0}$. Define the Krylov subspace by
\begin{equation*}
    \mathcal{K}_k(A, \boldsymbol{{r}_0})=\textrm{span}\it\{ \boldsymbol{r_0}, A \boldsymbol{r_0}, \dots, A^{k-\rm 1\it} \boldsymbol{r_0}\}.
\end{equation*}
At each step of GMRES, we seek $\boldsymbol{z_k}\in  \mathcal{K}_k(A, \boldsymbol{{r}_0})$ and $\boldsymbol{x}_k=\boldsymbol{x_0}+\boldsymbol{z_k}$, such that the 2-norm of the residual $\boldsymbol{r_k}=\boldsymbol{b}-A\boldsymbol{x}_k=\boldsymbol{b}-A(\boldsymbol{x_0}+\boldsymbol{z_k})=\boldsymbol{{r}_0}-A\boldsymbol{z_k}$ is minimized, i.e. $\min_{\boldsymbol{z_k}\in \mathcal{K}_k(A, \boldsymbol{{r}_0})}\|\boldsymbol{{r}_0}-A\boldsymbol{z_k}\|_2$.\par
GMRES minimizes the residual on an expanding Krylov subspace. Thus, the residual decreases monotonically.\par

Let $q_m(x)$ be the minimal polynomial of the nonsingular matrix $A$, i.e., 
\begin{equation*}
    \mathcal{O}=q_m(A)=\alpha_{0} \rm{I}+\alpha_{1} \it A+\dots+\alpha_{m}A^{m},
\end{equation*}
where $\rm I$ is the identity matrix and $\alpha_0\neq 0$. It follows that
\begin{equation*}
    A^{-1}=-\frac{1}{\alpha_0}\sum_{j=0}^{m-1}\alpha_{j+1}A^j
\end{equation*}
This representation of $A^{-1}$ characterizes $\boldsymbol{x}=A^{-1}\boldsymbol{b}$ as a member of a Krylov subspace \cite{Ipsen1998The}.\par
One reason why the Krylov subspace method is efficient is that the solution of the linear system $A\boldsymbol{x}=\boldsymbol{b}$ may belong to a Krylov subspace of degree much less than the size of $A$.\par
The degree of the polynomial with respect to $A$ and $\boldsymbol{b}$, can be even smaller than the degree of the minimal polynomial of $A$.\par
Note that
\begin{equation*}
    \max_{\|\boldsymbol{{r}_0}\|=1}\min_{p\in \pi_k}\|p(A)\boldsymbol{{r}_0}\|\leq  \max_{\|\boldsymbol{{r}_0}\|=1}\min_{p\in \pi_k}\|p(A)\|\|\boldsymbol{{r}_0}\|=\min_{p\in \pi_k}\|p(A)\|\leq\min_{1\leq i_1<\cdots<i_k\leq n}\|(A-\lambda_{i_1}\rm{I}\it)\cdots(A-\lambda_{i_k}\rm{I})\|,
\end{equation*}
where $\pi_k$ denotes the set of polynomials of degree at most $k$ and with value $1$ at the origin, and $\boldsymbol{{r}_0}=\boldsymbol{b}-A\boldsymbol{x_0}$ \cite{tichy2004worst}.
This inequality shows that the norm of a polynomial of $A$ bounds the residual if the $\|\boldsymbol{{r}_0}\|$ is a constant. We can select the polynomial factors of the characteristic polynomial, which is determined by the eigenvalues. This implies that the eigenvalues can determine an upper bound of the residual.
\par
On the other hand, there is a famous result that any non-increasing convergence curve (residual norm versus iterations) is possible with the matrix $A$ which can be chosen to have any desired eigenvalues \cite{Anne1996Any}. See also \cite{1998Krylov}. This implies that the eigenvalues alone cannot determine the convergence of the residual.\par
Is there a conflict between the above two results? Can the eigenvalues decide the convergence behavior of GMRES\thinspace? In fact, there is no conflict between them. The eigenvalues give an upper bound for the non-increasing residual curve together with the right hand side vector and the eigenvectors. The convergence is influenced by the distribution of the eigenvalues, the projection of the right hand sides $\boldsymbol{b}$ on each eigenvectors and the normality of the matrix $A$.

The maximum number of steps to converge is governed by the degree of the minimal polynomial of $A$, which determines the inverse of $A$. Moreover, with a certain $\boldsymbol{b}$, the degree is less. Within the degree, it can even converge faster to a lower level and stagnate after the degree, when the eigenvalues of the coefficient matrix cluster, such as in the case of inner-iteration preconditioned GMRES for least squares problems \cite{Morikuni15}.\par

Although any non-increasing curve can occur, convergence can be improved by clustering the distribution of eigenvalues of $A$, when the right hand side vector is fixed. See Ipsen's work \cite{ipsen2000expressions} for how the projection of $\boldsymbol{b}$ onto the eigen-space affects the residual bounds.  The Ritz values of GMRES are also related to the convergence of GMRES \cite{1993The}.

The normality of $A$ tends to help the convergence. Large condition number $\kappa(V)$ of $V$, the matrix consisting of the eigenvectors of $A$, can hinder the convergence \cite{saad1986}.\par

Our work mainly shows that the clustering of eigenvalues has an effect of lowering the degree of the minimal polynomial of $A$ with respected to $\boldsymbol{{r}_0}$, make the GMRES converge faster, when $A$ is diagonalizable. This is demonstrated in the example of the inner-iteration preconditioned GMRES for least squares problems.\par
The organization of the paper is as follows.  In section 2, we present a brief analysis of the upper bound of the residual. In section 3, we apply the analysis of section 2 to the case of clustered eigenvalues. In section 4, we offer a concise review of the inner-iteration preconditioned GMRES method for least squares problems, and provide a thorough analysis of the method by the framework established. In section 5, we present an illustrative example constructed based on the work of Greenbaum et al.  \cite{Anne1996Any}. This example is then interpreted using the theorem established in the preceding section. Finally, in section 6, we conclude the paper.

\section{Upper bound of the residual}
\begin{theorem}\label{theroyfirst}
   Let  $A\in\mathbb{R}^{n\times n}$ be diagonalizable ($A$ could be singular), $b\in \mathcal{R}(A)$, where $\mathcal{R}(A)$ is the range space of $A$,  and $d$ is the grade of $\mathcal{K}_{k}$($A, \boldsymbol{r_0}$). Let $\boldsymbol{r_0}=c_1\boldsymbol{v_1}+c_2\boldsymbol{v_2}+\dots+c_d\boldsymbol{v_d},$ where $v_i\in \mathbb{C}^{n}\; (\|v_i\|_2=1)$ are the eigenvectors of $A$ corresponding to nonzero eigenvalues $\lambda_i\in\mathbb{C}$ $(i=1, 2, \dots, d)$ and $c_i$ are the corresponding weights. Assume that $\lambda_i\neq \lambda_j$ if $i\neq j$. \\
   Then,
   \begin{align}\label{eqVandermonde}
    \min_{\boldsymbol{x}_k=\boldsymbol{x_0}+\boldsymbol{z_k},\; \boldsymbol{z_k}\in\mathcal{K}_k(A, 
 \boldsymbol{{r}_0})}\|\boldsymbol{b}-A\boldsymbol{x}_k\|_2
      &\leq\|V_d\textsc{diag}[c_1, c_2, \dots, c_d]\|_2\min_{\boldsymbol{y_k}\in\mathbb{C}^k}\|\Lambda_d^k \boldsymbol{y_k}-[1, 1, \dots, 1]^{\top}\|_2.
\end{align}

    where $V_d=[\boldsymbol{v_1}, \boldsymbol{v_2}, \dots, \boldsymbol{v_d}]$, $\boldsymbol{x_k}=\boldsymbol{x_0}+\boldsymbol{z_k}=\boldsymbol{x_0}+[\boldsymbol{r_0}, A\boldsymbol{r_0}, \dots, A^{k-1}\boldsymbol{r_0}]\boldsymbol{y_k}$ and
\begin{equation}\label{eqLambda}
    \Lambda_d^k=\left(
  \begin{array}{cccc}
   \lambda_1  &\lambda_1^2 & \cdots &\lambda_1^k  \\
   \lambda_2  & \lambda_2^2 & \cdots &\lambda_2^k  \\
     \ldots  &\ldots &\ldots &\ldots \\\
    \lambda_d & \lambda_d^2 & \cdots  & \lambda_d^k \\
  \end{array}
\right)\in \mathbb{C}^{d\times k}
\end{equation}
is a Vandermonde matrix.
\end{theorem}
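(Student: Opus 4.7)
The plan is to reduce the GMRES residual at step $k$ to a (small) Vandermonde least-squares problem by expressing everything in the eigenbasis of $A$. The starting observation is that any $z_k \in \mathcal{K}_k(A,\boldsymbol{r_0})$ can be written in the basis $\{\boldsymbol{r_0}, A\boldsymbol{r_0}, \ldots, A^{k-1}\boldsymbol{r_0}\}$ as $z_k = [\boldsymbol{r_0}, A\boldsymbol{r_0}, \ldots, A^{k-1}\boldsymbol{r_0}]\,\boldsymbol{y_k}$ for some $\boldsymbol{y_k}=(y_1,\ldots,y_k)^\top \in \mathbb{C}^k$, so that $A z_k = \sum_{j=1}^{k} y_j A^{j}\boldsymbol{r_0}$ and the GMRES residual minimization becomes $\min_{\boldsymbol{y_k}\in\mathbb{C}^k}\|\boldsymbol{r_0} - \sum_{j=1}^k y_j A^j \boldsymbol{r_0}\|_2$.

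Next I would substitute the eigenvector expansion $\boldsymbol{r_0}=\sum_{i=1}^{d} c_i \boldsymbol{v_i}$. This expansion is justified by the assumption that $d$ is the grade of $\mathcal{K}(A,\boldsymbol{r_0})$: because $A$ is diagonalizable with distinct eigenvalues, the grade equals the number of eigendirections that $\boldsymbol{r_0}$ actually activates, so $\boldsymbol{r_0}$ lies in the invariant subspace $\mathrm{span}\{\boldsymbol{v_1},\ldots,\boldsymbol{v_d}\}$. Applying $A^j \boldsymbol{v_i} = \lambda_i^j \boldsymbol{v_i}$ termwise gives
\begin{equation*}
\boldsymbol{r_0} - A z_k \;=\; \sum_{i=1}^d c_i\Bigl(1 - \sum_{j=1}^k y_j \lambda_i^j \Bigr)\boldsymbol{v_i} \;=\; V_d\,\textsc{diag}[c_1,\ldots,c_d]\,\bigl([1,\ldots,1]^\top - \Lambda_d^k \boldsymbol{y_k}\bigr),
\end{equation*}
where $\Lambda_d^k$ is exactly the Vandermonde-type matrix defined in \eqref{eqLambda}.

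The third step is the submultiplicative bound. For every fixed $\boldsymbol{y_k}$ one has $\|\boldsymbol{r_0} - A z_k\|_2 \leq \|V_d\,\textsc{diag}[c_1,\ldots,c_d]\|_2 \cdot \|\Lambda_d^k \boldsymbol{y_k} - [1,\ldots,1]^\top\|_2$. Taking the minimum over $\boldsymbol{y_k}$ on both sides, and observing that the left-hand minimum coincides with $\min_{\boldsymbol{x_k}=\boldsymbol{x_0}+\boldsymbol{z_k}}\|\boldsymbol{b}-A\boldsymbol{x_k}\|_2$ (since $\boldsymbol{b}-A\boldsymbol{x_k}=\boldsymbol{r_0}-A\boldsymbol{z_k}$), yields inequality \eqref{eqVandermonde}.

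I expect this proof to be largely routine once the spectral decomposition is in place; the only point requiring genuine care is the justification that the $d$ appearing as the grade of the Krylov subspace is the same $d$ that indexes the eigenvector expansion of $\boldsymbol{r_0}$. The hypothesis that $A$ is diagonalizable with distinct $\lambda_i$ (and that $\boldsymbol{b}\in\mathcal{R}(A)$, so that no zero eigenvalue contributes) makes this identification clean, but should be spelled out explicitly so that the bookkeeping of indices in the Vandermonde matrix $\Lambda_d^k$ is unambiguous. A secondary subtlety worth noting is that the bound is generally not sharp, since the inequality $\|Mx\|_2 \le \|M\|_2\|x\|_2$ need not be attained by the minimizer of $\|\Lambda_d^k \boldsymbol{y_k}-[1,\ldots,1]^\top\|_2$; this looseness is precisely the price paid to decouple the eigenvector geometry ($V_d$, $c_i$) from the eigenvalue-only polynomial problem captured by $\Lambda_d^k$.
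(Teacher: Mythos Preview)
Your proof is correct and follows essentially the same approach as the paper: expand $\boldsymbol{r_0}$ in the eigenbasis, rewrite the GMRES residual as $V_d\,\textsc{diag}[c_1,\ldots,c_d]\bigl([1,\ldots,1]^\top - \Lambda_d^k \boldsymbol{y_k}\bigr)$, and apply submultiplicativity before minimizing over $\boldsymbol{y_k}$. If anything, your version is slightly more careful in justifying why the grade $d$ matches the number of eigencomponents in $\boldsymbol{r_0}$, which the paper takes as a standing assumption.
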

\begin{proof}
     Consider the system of linear equations
\begin{equation}\label{eqfirst1}
    A\boldsymbol{x}=\boldsymbol{b},\qquad A\in \mathbb{R}^{n\times n},\qquad \boldsymbol{b}\in \mathbb{R}^{n}, \qquad \boldsymbol{b}\in \mathcal{R}(A).
\end{equation}\par
Assume  $A\in\mathbb{R}^{n\times n}$ is diagonalizable ($A$ could be singular), $\boldsymbol{r_0}=\boldsymbol{b}-A\boldsymbol{x_0}$, and $d$ is the grade of $\mathcal{K}_{k}$($A, \boldsymbol{r_0}$), which means $d$ is the smallest integer such that
$K_d(A,\boldsymbol{r_0})=K_{d+1}(A, \boldsymbol{r_0})$.
Let $\boldsymbol{r_0}=c_1\boldsymbol{v_1}+c_2\boldsymbol{v_2}+\dots+c_d\boldsymbol{v_d},$ where $\boldsymbol{v_i}\in \mathbb{C}^{n}\; (\|\boldsymbol{v_i}\|_2=1)$ is the eigenvector of $A$ corresponding to nonzero eigenvalue of $\lambda_i\in\mathbb{C}$ $(i=1, 2, \dots, d)$ and $c_i$ is the corresponding weight. Assume that $\lambda_i (i=1,2,\dots,d)$ are distinct. Thus, $\boldsymbol{r_0}$ has the following representation.
\begin{equation}\label{eqsecond2}
    \boldsymbol{r_0}=[c_1\boldsymbol{v_1},c_2\boldsymbol{v_2},\dots,c_d\boldsymbol{v_d}][1,\cdots,1]^{\top}.
\end{equation}
Then
\begin{align*} A\boldsymbol{r_0}&=A(c_1\boldsymbol{v_1}+c_2\boldsymbol{v_2}+\dots+c_d\boldsymbol{v_d})=c_1A\boldsymbol{v_1}+c_2A\boldsymbol{v_2}+\dots+c_dA\boldsymbol{v_d}
\\
&=c_1\lambda_1\boldsymbol{v_1}+c_2\lambda_2\boldsymbol{v_2}+\dots+c_d\lambda_d\boldsymbol{v_d}\\
&=[c_1\boldsymbol{v_1},c_2\boldsymbol{v_2},\dots,c_d\boldsymbol{v_d}][\lambda_1, \lambda_2, \dots,\lambda_d]^{\top}.
\end{align*}
Similarly, 
\begin{equation*}
        A^k\boldsymbol{r_0}=[c_1\boldsymbol{v_1},c_2\boldsymbol{v_2},\dots,c_d\boldsymbol{v_d}][\lambda_1^k, \lambda_2^k, \dots,\lambda^k_d]^{\top}.
\end{equation*}
    Hence, for the $k$th iterate $\boldsymbol{x_k}=\boldsymbol{x_0}+[\boldsymbol{r_0}, A\boldsymbol{r_0},\dots, A^{k-1}\boldsymbol{r_0}]\boldsymbol{y_k}$ of $A\boldsymbol{x}=\boldsymbol{b}$,\\
\begin{align*}
        \min_{\boldsymbol{x}_k=\boldsymbol{x_0}+\boldsymbol{z_k},\; \boldsymbol{z_k}\in\mathcal{K}_k(A, \boldsymbol{{r}_0})} \|\boldsymbol{b}-A\boldsymbol{x}_k\|_2&=\min_{\boldsymbol{y_k}\in\mathbb{C}^k} \|\boldsymbol{r_0}-A[\boldsymbol{r_0}, A\boldsymbol{r_0},\dots, A^{k-1}\boldsymbol{r_0}]\boldsymbol{y_k}\|_2\\
         &=\min_{\boldsymbol{y_k}\in\mathbb{C}^k} \|[c_1\boldsymbol{v_1},c_2\boldsymbol{v_2},\dots,c_d\boldsymbol{v_d}][1, 1, \dots, 1]^{\top}-[c_1\boldsymbol{v_1}, c_2\boldsymbol{v_2}, \dots,c_d\boldsymbol{v_d}]\Lambda_d^k \boldsymbol{y_k}\|_2\\
         &=\min_{\boldsymbol{y_k}\in\mathbb{C}^k} \|[c_1\boldsymbol{v_1}, c_2\boldsymbol{v_2}, \dots,c_d\boldsymbol{v_d}]([1, 1, \dots, 1]^{\top}-\Lambda_d^k \boldsymbol{y_k})\|_2
\end{align*}
    where
\begin{equation}\label{eqLambda}
    \Lambda_d^k=\left(
  \begin{array}{cccc}
   \lambda_1  &\lambda_1^2 & \cdots &\lambda_1^k  \\
   \lambda_2  & \lambda_2^2 & \cdots &\lambda_2^k  \\
     \ldots  &\ldots &\ldots &\ldots \\\
    \lambda_d & \lambda_d^2 & \cdots  & \lambda_d^k \\
  \end{array}
\right)\in \mathbb{C}^{d\times k}
\end{equation}
is a Vandermonde matrix.\par
Thus, we have
\begin{align}
    \min_{\boldsymbol{x}_k=\boldsymbol{x_0}+\boldsymbol{z_k},\; \boldsymbol{z_k}\in\mathcal{K}_k(A,\boldsymbol{{r}_0})}\|\boldsymbol{b}-A\boldsymbol{x}_k\|_2&=\min_{\boldsymbol{y_k}\in\mathbb{C}^k}\|[c_1\boldsymbol{v_1},c_2\boldsymbol{v_2},\dots,c_d\boldsymbol{v_d}](\Lambda_d^k \boldsymbol{y_k}-[1, 1, \dots, 1]^{\top})\|_2\\
      &\leq\min_{\boldsymbol{y_k}\in\mathbb{C}^k}\|[c_1\boldsymbol{v_1},c_2\boldsymbol{v_2},\dots,c_d\boldsymbol{v_d}]\|_2\|\Lambda_d^k \boldsymbol{y_k}-[1, 1, \dots, 1]^{\top}\|_2 \nonumber\\
      &=\|V_d\text{diag}[c_1, c_2, \dots, c_d]\|_2\min_{\boldsymbol{y_k}\in\mathbb{C}^k}\|\Lambda_d^k \boldsymbol{y_k}-[1, 1, \dots, 1]^{\top}\|_2 .\label{eqVandermonde}
\end{align}
where $V_d=[\boldsymbol{v_1}, \boldsymbol{v_2}, \dots, \boldsymbol{v_d}]$.
\end{proof}
For the case when $\lambda_i (i=1, 2, \dots, d)$ are not distinct, we can analyze similarly.
For example, 
for $\boldsymbol{r_0}=c_1\boldsymbol{v_1}+c_2\boldsymbol{v_2}+\dots+c_5\boldsymbol{v_5}$, where $\boldsymbol{v_1}, \boldsymbol{v_2}, \boldsymbol{v_3}$ correspond to $\lambda_1$, $\boldsymbol{v_4}, \boldsymbol{v_5}$ correspond to $ \lambda_2$ Then,
\begin{align*}
A\boldsymbol{r_0}&=A(c_1\boldsymbol{v_1}+c_2\boldsymbol{v_2}+\dots+c_5\boldsymbol{v_5})\\
&=c_1\lambda_1\boldsymbol{v_1}+c_2\lambda_1\boldsymbol{v_2}+c_1\lambda_1\boldsymbol{v_3}+c_4\lambda_2\boldsymbol{v_4}+c_5\lambda_2\boldsymbol{v_5}\\
&=(c_1\boldsymbol{v_1}+c_2\boldsymbol{v_2}+c_3\boldsymbol{v_3})\lambda_1+(c_4\boldsymbol{v_4}+c_5\boldsymbol{v_5})\lambda_2\\
&=[\widetilde{c_1}\boldsymbol{\widetilde{v_1}}, \widetilde{c_2}\boldsymbol{\widetilde{v_2}}][\lambda_1, \lambda_2]^\top.
\end{align*}
This example shows that the multiple eigenvalues merge, and only the distinct eigenvalues contribute in constructing the Vandermonde coefficient matrix.

Note that, for diagonalizable matrix $A$, Saad \cite{saad1986} gives the bound
\begin{equation}\label{saadbound}
  \frac{\|\boldsymbol{r_k}\|_2}{\|\boldsymbol{{r}_0}\|_2}\leq \kappa(V)\max_{i=1,2,\dots, n}|p(\lambda_i)|,
\end{equation}
where $p$ is any polynomial of degree $\leq k$ which satisfies the constraint $p(0)=1$, and the $k$th iterate vector $\boldsymbol{x_k}\in \boldsymbol{x_0}+\mathcal{K}_d(A, \boldsymbol{{r}_0})$ is associated with the residual vector $\boldsymbol{r_k}=\boldsymbol{b}-A
\boldsymbol{x_k}=p(A)\boldsymbol{{r}_0}$,
and the eigenvalue decomposition of $A$ is given by $A=VDV^{-1}$ where $D$ is a diagonal matrix consisting of the eigenvalues of $A$, $V$ is a matrix whose columns are eigenvectors of $A$. The bound contains $\kappa(V)=\|V\|_2\|V^{-1}\|_2$. However, our bound (\ref{eqVandermonde}) does not contain $\kappa(V)$. It contains 
$\|[c_1\boldsymbol{v_1},c_2\boldsymbol{v_2},\dots,c_d\boldsymbol{v_d}]\|_2$,
and $\min_{\boldsymbol{y_k}\in\mathbb{C}^k}$$\|\Lambda_d^k \boldsymbol{y_k}-[1,\cdots,1]^{\top}\|_2$, which 
depends on $\Lambda_d^k$ instead of $V$. Moreover, our bound involves $c_i$, which is different from (\ref{saadbound}). \par
By passing, the following lemma holds for the case when $A$ is normal.
\begin{lemma}
     If $\boldsymbol{v_1}, \boldsymbol{v_2}, \dots, \boldsymbol{v_d}$ are orthonormal, then
     \begin{equation*}
   \min_{\boldsymbol{x}_k=\boldsymbol{x_0}+\boldsymbol{z_k},\; \boldsymbol{z_k}\in\mathcal{K}_k(A,\boldsymbol{{r}_0})} \frac{\|\boldsymbol{r_k}\|_2}{\|\boldsymbol{r_0}\|_2}\leq\frac{\|c\|_{\infty}}{\|c\|_2}\min_{\boldsymbol{y_k}\in\mathbb{C}^k}\|\Lambda_d^k \boldsymbol{y_k}-[1, 1, \dots, 1]^{\top}\|_2.
\end{equation*}
where $\boldsymbol{r_k}=\boldsymbol{b}-A\boldsymbol{x_k}.$
 \end{lemma}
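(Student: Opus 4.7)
The plan is to obtain this lemma as an immediate corollary of Theorem \ref{theroyfirst}. The inequality \eqref{eqVandermonde} already isolates the Vandermonde factor $\min_{\boldsymbol{y_k}}\|\Lambda_d^k\boldsymbol{y_k}-[1,\dots,1]^\top\|_2$ multiplied by the scalar $\|V_d\,\textsc{diag}[c_1,\dots,c_d]\|_2$, so the whole content of the lemma is to simplify this scalar and the normalizing denominator $\|\boldsymbol{r_0}\|_2$ under the extra hypothesis that the eigenvectors form an orthonormal set.

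First I would compute the leading coefficient. Orthonormality of $\boldsymbol{v_1},\dots,\boldsymbol{v_d}$ together with the already-assumed normalization $\|\boldsymbol{v_i}\|_2 = 1$ from Theorem \ref{theroyfirst} means $V_d^\ast V_d = I_d$, so $V_d$ is an isometry from $\mathbb{C}^d$ into $\mathbb{C}^n$. Consequently $\|V_d M\|_2 = \|M\|_2$ for every matrix $M$ with $d$ rows, and in particular $\|V_d\,\textsc{diag}[c_1,\dots,c_d]\|_2 = \|\textsc{diag}[c_1,\dots,c_d]\|_2 = \max_i |c_i| = \|c\|_\infty$, where $c = [c_1,\dots,c_d]^\top$.

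Next I would handle the denominator. From the decomposition $\boldsymbol{r_0} = V_d c$ used in the proof of Theorem \ref{theroyfirst}, the same isometry property gives $\|\boldsymbol{r_0}\|_2^2 = c^\ast V_d^\ast V_d c = \|c\|_2^2$, hence $\|\boldsymbol{r_0}\|_2 = \|c\|_2$. Dividing both sides of \eqref{eqVandermonde} by $\|\boldsymbol{r_0}\|_2$ and substituting the two identities $\|V_d\,\textsc{diag}[c_1,\dots,c_d]\|_2 = \|c\|_\infty$ and $\|\boldsymbol{r_0}\|_2 = \|c\|_2$ yields precisely the claimed bound with ratio $\|c\|_\infty/\|c\|_2$ in front.

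I do not expect any real obstacle: the argument is pure bookkeeping built on top of Theorem \ref{theroyfirst}. The only point to be attentive about is the interpretation of the hypothesis, namely that ``$\boldsymbol{v_1},\dots,\boldsymbol{v_d}$ are orthonormal'' means adding mutual orthogonality to the unit-norm condition already stipulated in Theorem \ref{theroyfirst}, giving exactly $V_d^\ast V_d = I_d$; this is in particular implied by the normality of $A$, consistent with the framing in the paragraph preceding the lemma.
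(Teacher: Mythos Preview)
Your proposal is correct and follows essentially the same route as the paper: both arguments reduce to Theorem \ref{theroyfirst}, then use orthonormality of the $\boldsymbol{v_i}$ to identify $\|V_d\,\textsc{diag}[c_1,\dots,c_d]\|_2$ with $\|c\|_\infty$ and $\|\boldsymbol{r_0}\|_2$ with $\|c\|_2$. The only cosmetic difference is that you invoke the isometry property $\|V_d M\|_2=\|M\|_2$ abstractly, whereas the paper computes the operator norm directly from the definition as a supremum over unit vectors.
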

\begin{proof}
    If $\boldsymbol{v_1}, \boldsymbol{v_2}, \dots, \boldsymbol{v_d}$ are orthonormal, then $\|\boldsymbol{r_0}\|_2=\sqrt{c_1^2+\dots+c_d^2}=\|c\|_2$, where  $c=[c_1, c_2, \dots, c_d]^{\top}$. Let $\boldsymbol{x}=[x_1, x_2, \dots, x_d]^{\top}.$\\
Then,
\begin{align*}
    \|[c_1\boldsymbol{v_1},c_2\boldsymbol{v_2},\dots,c_d\boldsymbol{v_d}]\|_2=&\max_{\|\boldsymbol{x}\|_2=1}\|[c_1\boldsymbol{v_1},c_2\boldsymbol{v_2},\dots,c_d\boldsymbol{v_d}][x_1, x_2, \dots, x_d]^{\top}\|_2 \\
    =&\max_{\|\boldsymbol{x}\|_2=1}\|c_1x_1\boldsymbol{v_1}+c_2x_2\boldsymbol{v_2}+\dots+c_dx_d\boldsymbol{v_d}\|_2 \\
   =&  \max_{\|\boldsymbol{x}\|_2=1}\{(c_1x_1)^2+(c_2x_2)^2+\cdots+(c_dx_d)^2\}^{\frac{1}{2}}  \\
    =&\max_{1\leq i\leq d}|c_i|\\
   =& \|c\|_{\infty},
\end{align*}
and we have
\begin{equation*}
   \min_{\boldsymbol{x}_k=\boldsymbol{x_0}+\boldsymbol{z_k},\; \boldsymbol{z_k}\in\mathcal{K}_k(A,\boldsymbol{{r}_0})} \frac{\|\boldsymbol{b}-A\boldsymbol{x_k}\|_2}{\|\boldsymbol{r_0}\|_2}= \min_{\boldsymbol{x}_k=\boldsymbol{x_0}+\boldsymbol{z_k},\; \boldsymbol{z_k}\in\mathcal{K}_k(A,\boldsymbol{{r}_0})} \frac{\|\boldsymbol{r_k}\|_2}{\|\boldsymbol{r_0}\|_2}\leq\frac{\|c\|_{\infty}}{\|c\|_2}\min_{\boldsymbol{y_k}\in\mathbb{C}^k}\|\Lambda_d^k \boldsymbol{y_k}-[1, 1, \dots, 1]^{\top}\|_2.
\end{equation*}
\end{proof}
\section{Clustered case}
Now we return to the general case where $A$ is not necessarily normal. Consider the case when the eigenvalues of $A$ have a clustered structure, where there are $s$ clusters, and each eigenvalue $\lambda_i\in \mathbb{C}, 1\leq i\leq d$ belongs to a cluster around a center $\gamma_j\neq 0\in \mathbb{C}$ with a small radius $\epsilon>0$, $i.e.$ $\lambda_i=\gamma_j+\epsilon_i$,\; $1\leq j\leq s$,\; $0\leq |\epsilon_i|\leq\epsilon\ll 1,$ where $\epsilon\equiv\max(|\epsilon_1|, |\epsilon_2|, \dots, |\epsilon_d|)$, and $\gamma_1, \gamma_2, \dots, \gamma_s$ are distinct, i.e. $\gamma_{j_1}\neq\gamma_{j_2}$ if ${j_1}\neq {j_2}$. Note that, the original cluster structure of eigenvalues will be adjusted according to the number of iterations $k$ which limits the number of cluster centers $\gamma_j$. Then, the following theorem holds.\par
\begin{theorem}\label{theoremtwo}
     \begin{align}
    \min_{\boldsymbol{x}_k=\boldsymbol{x_0}+\boldsymbol{z_k},\; \boldsymbol{z_k}\in\mathcal{K}_k(A,\boldsymbol{{r}_0})}\|\boldsymbol{b}-A\boldsymbol{x}_k\|_2
      &\leq\|V_d\textsc{diag}[c_1, c_2, \dots, c_d]\|_2\|(f(\lambda_1), f(\lambda_2), \dots, f(\lambda_d))^\top\|_2,
\end{align}
where  \begin{equation*}
       f(\gamma)=(-1)^{k-1}\frac{1}{\prod_{j=1}^k \gamma_j}\prod_{j=1}^k(\gamma-\gamma_j).
   \end{equation*}.
\end{theorem}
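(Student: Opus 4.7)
The plan is to reduce the statement to Theorem~\ref{theroyfirst} and then estimate the Vandermonde minimization factor by exhibiting one convenient choice of $\boldsymbol{y_k}$.

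Starting from the bound of Theorem~\ref{theroyfirst}, the factor $\|V_d\,\textsc{diag}[c_1,\ldots,c_d]\|_2$ already appears, so the only thing left to control is
\[
\min_{\boldsymbol{y_k}\in\mathbb{C}^k}\bigl\|\Lambda_d^k \boldsymbol{y_k}-[1,\ldots,1]^\top\bigr\|_2.
\]
Observe that the $i$-th entry of $\Lambda_d^k\boldsymbol{y_k}-[1,\ldots,1]^\top$ is $-1+\sum_{j=1}^k y_j\lambda_i^j = p(\lambda_i)$, where $p(\lambda)=-1+y_1\lambda+\cdots+y_k\lambda^k$. Thus any choice of $\boldsymbol{y_k}$ is equivalent to choosing a polynomial of degree at most $k$ with $p(0)=-1$, and the minimum is bounded above by $\|(p(\lambda_1),\ldots,p(\lambda_d))^\top\|_2$ for any such $p$.

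The key step is to verify that the function $f(\gamma)=(-1)^{k-1}\prod_{j=1}^{k}\gamma_j^{-1}\prod_{j=1}^k(\gamma-\gamma_j)$ is exactly a polynomial of this admissible form. It is clearly a polynomial in $\gamma$ of degree $k$, and evaluating at $\gamma=0$ gives
\[
f(0)=(-1)^{k-1}\frac{\prod_{j=1}^k(-\gamma_j)}{\prod_{j=1}^k \gamma_j}=(-1)^{k-1}(-1)^k=-1,
\]
so $f$ lies in the feasible set. Expanding $f(\lambda)=-1+y_1\lambda+\cdots+y_k\lambda^k$ defines a specific vector $\boldsymbol{y_k}$, for which $\Lambda_d^k\boldsymbol{y_k}-[1,\ldots,1]^\top = (f(\lambda_1),\ldots,f(\lambda_d))^\top$. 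Taking norms and combining with Theorem~\ref{theroyfirst} yields the claimed inequality.

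I do not anticipate a serious obstacle: the argument is essentially an instantiation of the Vandermonde minimization with a single clever polynomial. The only subtle point worth flagging is the interpretation of the centers $\gamma_1,\ldots,\gamma_k$ in $f$ when the number of iterations $k$ differs from the number of clusters $s$; the proof itself goes through for any choice of $k$ nonzero points $\gamma_j$, but the subsequent superlinear-convergence interpretation relies on these being (close to) the cluster centers so that each factor $\lambda_i-\gamma_j$ is of size $O(\epsilon)$. That motivation is not needed for the bound itself, which is purely an existence argument using $f$ as a feasible polynomial.
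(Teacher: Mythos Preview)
Your proposal is correct and follows essentially the same approach as the paper: both reduce to Theorem~\ref{theroyfirst} and then bound the Vandermonde minimization by plugging in the specific coefficient vector obtained from expanding $f(\gamma)$, using that $f(0)=-1$ so $f$ has the required form. Your explicit verification that $f(0)=(-1)^{k-1}(-1)^k=-1$ is in fact cleaner than the paper's treatment, which asserts the expansion $f(\gamma)=y_1^k\gamma^k+\cdots+y_1^1\gamma-1$ without checking the constant term.
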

\begin{proof}
    Define the polynomial $f(\gamma)$ using distinct centers $\gamma_1, \gamma_2, \dots, \gamma_k$  as
     \begin{equation*}
       f(\gamma)=(-1)^{k-1}\frac{1}{\prod_{j=1}^k \gamma_j}\prod_{j=1}^k(\gamma-\gamma_j).
   \end{equation*}
    
    Expand the polynomial $f(\gamma)$ as
    \begin{equation*}
        f(\gamma)=y_1^k\gamma^k+y_1^{k-1}\gamma^{k-1}+\cdots+y_1^1\gamma-1.
    \end{equation*}
  Thus, $\;$$f(\gamma_j)=0$ is the $j$th equation of $\widetilde{\Lambda}_k y-[1,\cdots,1]^{\top}=0,$ and $y_1$ is the solution to this system, it follows that $\gamma_j$ $(j= 1, 2, \dots, k)$ are the roots of $f(\gamma)=0$, where\begin{equation*}
    \widetilde{\Lambda}_k=\left(
  \begin{array}{cccc}
   \gamma_1 &\gamma_1^2 & \cdots &\gamma_1^k\\
   \gamma_2 & \gamma_2^2 & \cdots &\gamma_2^k\\
     \ldots  &\ldots &\ldots &\ldots\\
    \gamma_k & \gamma_k^2 & \cdots  & \gamma_k^k\\
  \end{array}
\right) \in \mathbb{C}^{k\times k}.
\end{equation*}\\ From the expansion of the polynomial $f(\gamma)$, we get $\boldsymbol{y_1}=(y_1^1, y_1^2, \dots, y_1^k)$, where $y_1=\arg \min_{y\in \mathbb{C}^k} \|\widetilde{\Lambda}_k y-[1,\cdots,1]^{\top}\|_2$.\\ Then, we have  
   \begin{equation*}
       f(\lambda_i)=y_1^k\lambda_i^k+y_1^{k-1}\lambda_i^{k-1}+\cdots+y_1^1\lambda_i-1.
   \end{equation*}
   \begin{equation*}
       \min_{\boldsymbol{y_k}\in\mathbb{C}^k}\|\Lambda_d^k \boldsymbol{y_k}-[1, 1, \dots, 1]^{\top}\|_2 \leq\|\Lambda_d^k \boldsymbol{y_1}-[1, 1, \dots, 1]^{\top}\|_2=\|(f(\lambda_1), f(\lambda_2), \dots, f(\lambda_d))^\top\|_2.
   \end{equation*}
   Thus, from Theorem\ref{theroyfirst} we have \begin{align}
    \min_{\boldsymbol{x}_k=\boldsymbol{x_0}+\boldsymbol{z_k},\; \boldsymbol{z_k}\in\mathcal{K}_k(A,\boldsymbol{{r}_0})}\|\boldsymbol{b}-A\boldsymbol{x}_k\|_2
      &\leq\|V_d\textsc{diag}[c_1, c_2, \dots, c_d]\|_2\|(f(\lambda_1), f(\lambda_2), \dots, f(\lambda_d))^\top\|_2,
\end{align}\end{proof}
\begin{theorem}\label{theoremthree}
    Let $\lambda_i=\gamma_j+\epsilon_i, 1\leq j\leq s, 0\leq |\epsilon_i|\leq\epsilon\ll 1,$ $f(\gamma_j)=0$, where $\epsilon\equiv\max(|\epsilon_1|, |\epsilon_2|, \dots, |\epsilon_d|)$, and  \begin{equation*}
        f(\gamma)\equiv y_1^k\gamma^k+y_1^{k-1}\gamma^{k-1}+\cdots+y_1^1\gamma-1.
    \end{equation*}
    Then, we have
    \begin{equation*}
        \|(f(\lambda_1), f(\lambda_2), \dots, f(\lambda_d))^\top\|_2 = \mathcal{O}(\epsilon).
    \end{equation*}

\end{theorem}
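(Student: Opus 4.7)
The plan is to exploit the product representation of $f$ used in the proof of Theorem~\ref{theoremtwo}, namely $f(\gamma)=(-1)^{k-1}\bigl(\prod_{m=1}^k\gamma_m\bigr)^{-1}\prod_{m=1}^k(\gamma-\gamma_m)$, together with the geometric observation that each $\lambda_i$ sits within distance $\epsilon$ of one of the roots of $f$. This forces exactly one factor in $f(\lambda_i)$ to be of size $\mathcal{O}(\epsilon)$, while the remaining factors are uniformly bounded by constants depending only on the fixed cluster centers. A trivial $\sqrt d$ factor then converts the pointwise bound into the Euclidean-norm bound.

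To carry this out, I would first fix $i$ and pick the index $j(i)$ such that $\lambda_i=\gamma_{j(i)}+\epsilon_i$ with $|\epsilon_i|\le\epsilon$; by hypothesis $f(\gamma_{j(i)})=0$, so $\gamma_{j(i)}$ is among the roots used to build $f$. Writing
\begin{equation*}
   f(\lambda_i)=(-1)^{k-1}\frac{1}{\prod_{m=1}^k\gamma_m}\,(\lambda_i-\gamma_{j(i)})\prod_{m\neq j(i)}(\lambda_i-\gamma_m),
\end{equation*}
the first factor satisfies $|\lambda_i-\gamma_{j(i)}|=|\epsilon_i|\le\epsilon$, and for $m\neq j(i)$ the triangle inequality gives $|\lambda_i-\gamma_m|\le|\gamma_{j(i)}-\gamma_m|+\epsilon$, which is $\mathcal{O}(1)$ since the set $\{\gamma_1,\dots,\gamma_k\}$ is fixed. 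Setting $M=\max_{j\neq m}|\gamma_j-\gamma_m|$ and $\mu=\bigl|\prod_{m=1}^k\gamma_m\bigr|>0$ (each $\gamma_m\neq 0$ by assumption), this yields the uniform pointwise estimate
\begin{equation*}
   |f(\lambda_i)|\le \frac{\epsilon\,(M+\epsilon)^{k-1}}{\mu}=\mathcal{O}(\epsilon).
\end{equation*}

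Finally, taking the $\ell_2$ norm over $i=1,\dots,d$ and using $\|x\|_2\le\sqrt d\,\|x\|_\infty$ gives
\begin{equation*}
   \|(f(\lambda_1),\dots,f(\lambda_d))^\top\|_2\le \sqrt d\cdot\frac{\epsilon\,(M+\epsilon)^{k-1}}{\mu}=\mathcal{O}(\epsilon),
\end{equation*}
as $\epsilon\to 0$, which is the desired conclusion. The argument is essentially a first-order expansion of $f$ at each root, and I do not foresee a real obstacle; the only bookkeeping one has to do carefully is to verify that the hidden constant in $\mathcal{O}(\epsilon)$ depends only on $k$, $d$, and the geometry $(\gamma_j)$, but not on $\epsilon$ itself, which is immediate from expanding $(M+\epsilon)^{k-1}=M^{k-1}+\mathcal{O}(\epsilon)$. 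The mild notational nuisance is simply keeping track of the map $i\mapsto j(i)$ so that each eigenvalue is paired with its own cluster root.
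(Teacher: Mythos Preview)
Your argument is correct. Both you and the paper exploit the single fact that $f(\gamma_{j(i)})=0$ and $|\lambda_i-\gamma_{j(i)}|\le\epsilon$, but the executions differ: the paper writes the (exact) Taylor expansion
\[
f(\lambda_i)=f(\gamma_{j(i)})+\sum_{l=1}^{k}\frac{f^{(l)}(\gamma_{j(i)})}{l!}\,\epsilon_i^{\,l}=\mathcal{O}(\epsilon_i),
\]
whereas you use the product representation of $f$ and isolate the single factor $(\lambda_i-\gamma_{j(i)})=\epsilon_i$ while bounding the remaining $k-1$ factors by $(M+\epsilon)$. Your route has the advantage of producing an explicit, computable constant $\sqrt{d}\,(M+\epsilon)^{k-1}/\mu$ in the $\mathcal{O}(\epsilon)$, which the Taylor version leaves implicit in the derivatives $f^{(l)}(\gamma_j)$; the paper's route is a line shorter and does not need to invoke the factored form from Theorem~\ref{theoremtwo}. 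Conceptually they are the same first-order argument, just packaged differently.
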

\begin{proof}
Since $f(\lambda)$ is a $k$-th order polynomial, by Talyor expansion
    \begin{equation*}
    f(\lambda_i)=f(\gamma_j+\epsilon_i)=f(\gamma_j)+\sum_{l=1}^k\frac{f^{(l)}(\gamma_j)}{l!}\epsilon_i^l=\sum_{l=1}^k\frac{f^{(l)}(\gamma_j)}{l!}\epsilon_i^l=\mathcal{O}(\epsilon_i).
    \end{equation*}
        Since $0\leq \epsilon_i\leq\epsilon\ll 1$, we have
    \begin{equation*}
        \|(f(\lambda_1), f(\lambda_2), \dots, f(\lambda_d))^\top\|_2=\|(\mathcal{O}(\epsilon_1), \mathcal{O}(\epsilon_2),\dots, \mathcal{O}(\epsilon_d))^\top\|_2=\mathcal{O}(\epsilon).
    \end{equation*}
\end{proof}
At step $k$, replace $\lambda_i$ in (\ref{eqLambda}) by $\gamma_j+\epsilon_i$ to obtain
\begin{equation}\label{euqationdef61}\Lambda_\epsilon\equiv\Lambda_d^k=\left(
  \begin{array}{cccc}
   \gamma_1+\epsilon_1  &(\gamma_1+\epsilon_1)^2 & \cdots &(\gamma_1+\epsilon_1)^k\\
    \gamma_1+\epsilon_2  &(\gamma_1+\epsilon_2)^2 & \cdots &(\gamma_1+\epsilon_2)^k\\
    \cdots&\cdots&\cdots&\cdots\\
   \gamma_2+\epsilon_i  & (\gamma_2+\epsilon_i)^2 & \cdots &(\gamma_2+\epsilon_i)^k\\
     \cdots  &\cdots &\cdots &\cdots\\
    \gamma_s+\epsilon_d & (\gamma_s+\epsilon_d)^2 & \cdots& (\gamma_s+\epsilon_d)^k\\
  \end{array}
\right)\in \mathbb{C}^{d\times k}.
\end{equation}
Then,
\begin{equation*}
    \Lambda_\epsilon\approx \widetilde{\Lambda}_\epsilon
= \Lambda_s+P.
\end{equation*}

\begin{equation*}
    \widetilde{\Lambda}_\epsilon=\left(
  \begin{array}{cccc}
   \gamma_1+\epsilon_1  &\gamma_1^2+2\gamma_1\epsilon_1 & \cdots &\gamma_1^k+k\gamma_1^{k-1}\epsilon_1\\
   \gamma_1+\epsilon_2  &\gamma_1^2+2\gamma_1\epsilon_2 & \cdots &\gamma_1^k
   +k\gamma_1^{k-1}\epsilon_2\\
       \cdots&\cdots&\cdots&\cdots\\
   \gamma_2+\epsilon_i  & \gamma_2^2+2\gamma_2\epsilon_i & \cdots &\gamma_2^k+k\gamma_2^{k-1}\epsilon_i\\
     \cdots  &\cdots &\cdots &\cdots\\
    \gamma_s+\epsilon_d & \gamma_s^2+2\gamma_s\epsilon_d & \cdots  & \gamma_s^k+k\gamma_s^{k-1}\epsilon_d\\
  \end{array}
\right) \in\mathbb{C}^{d\times k}.
\end{equation*}

\begin{equation*}
    \Lambda_s=\left(
  \begin{array}{cccc}
   \gamma_1  &\gamma_1^2 & \cdots &\gamma_1^k\\
   \gamma_1 &\gamma_1^2 & \cdots &\gamma_1^k \\
       \cdots&\cdots&\cdots&\cdots\\
   \gamma_2 & \gamma_2^2 & \cdots &\gamma_2^k \\
     \cdots  &\cdots &\cdots &\cdots \\
    \gamma_s & \gamma_s^2 & \cdots  & \gamma_s^k\\
  \end{array}
\right)\in\mathbb{C}^{d\times k},\qquad P=\left(
  \begin{array}{cccc}
   \epsilon_1  &2\gamma_1\epsilon_1 & \cdots &k\gamma_1^{k-1}\epsilon_1 \\
   \epsilon_2  & 2\gamma_1\epsilon_2 & \cdots&k\gamma_1^{k-1}\epsilon_2 \\
     \cdots  &\cdots &\cdots &\cdots\\
    \epsilon_d & 2\gamma_s\epsilon_d & \cdots  & k\gamma_s^{k-1}\epsilon_d\\
  \end{array}
\right)\in\mathbb{C}^{d\times k}.
\end{equation*}
Deleting identical rows of $\Lambda_s$, we obtain $\widetilde{\Lambda}_s$,

Hence, denote the $i$th row of $P$ as 
$[\epsilon_i\quad 2\gamma_{j(i)}\epsilon_i\quad \dots \quad k\gamma^{k-1}_{j(i)}\epsilon_i]$ to indicate that the elements of the $i$th row are related to the center $\gamma_{j(i)}$.
\begin{lemma}\label{lemmaesecond}
      Let $A$ have a cluster of eigenvalues $\lambda_i, 1\leq i\leq d$ around $\gamma_{j(1)}=1$ with $\lambda_i=1+\epsilon_i$, where the eigenvalues $\lambda_i$ are numbered in descending order of $|\epsilon_i|$. Then, at the $k$th iteration $(k>1)$, we have
\begin{align} 
      \|r_k\|_2 \leq \|V_d\textsc{diag}[c_1, c_2, \dots, c_d]\|_2\;\|(f(\lambda_k), f(\lambda_{k+1}), \dots, f(\lambda_d))^\top\|_2,
\end{align}
where  \begin{equation*}
       f(\gamma)=(-1)^{k-1}\frac{\gamma-1}{\prod_{i=1}^{k-1} \lambda_i}\prod_{i=1}^{k-1}(\gamma-\lambda_i).
   \end{equation*}
  For $k=1$, $f(\gamma)=\gamma-1$ . Here, the  
     first order approximation of $\|(f(\lambda_k), f(\lambda_{k+1}), \dots, f(\lambda_d))^\top\|_2$ in terms of $\epsilon_k$  is given by $\epsilon_k\sqrt{n-k+1}\;\Pi_{i=1}^{k-1}\frac{1-\lambda_i}{\lambda_i}$.
\end{lemma}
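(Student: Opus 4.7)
The plan is to adapt Theorem \ref{theoremtwo}: instead of using $k$ distinct cluster centers as the roots of the interpolating polynomial, I would exploit the fact that there is only one cluster (centered at $\gamma = 1$) and take as the $k$ prescribed roots the center itself together with the $k-1$ eigenvalues $\lambda_1,\dots,\lambda_{k-1}$ that are farthest from it. Because the eigenvalues are indexed in descending order of $|\epsilon_i|$, these are precisely the ``most stubborn'' components of $\boldsymbol{r_0}$, and forcing the polynomial to kill them explicitly is what sharpens the bound beyond Theorem \ref{theoremtwo}.

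The construction proceeds as follows. First I would verify that the stated $f$ has degree exactly $k$ and satisfies $f(0)=-1$, so that its expansion $f(\gamma)=y_1^k\gamma^k+\dots+y_1^1\gamma-1$ produces a valid candidate vector $\boldsymbol{y}\in\mathbb{C}^k$ to feed into the minimization over $\boldsymbol{y_k}$ in Theorem \ref{theroyfirst}. This yields
\[
    \min_{\boldsymbol{y_k}\in\mathbb{C}^k}\|\Lambda_d^k\boldsymbol{y_k}-[1,\dots,1]^\top\|_2\ \le\ \|(f(\lambda_1),\dots,f(\lambda_d))^\top\|_2,
\]
and since $f(\lambda_i)=0$ for $i=1,\dots,k-1$ by construction, the first $k-1$ entries of this vector vanish and the right-hand side collapses to $\|(f(\lambda_k),\dots,f(\lambda_d))^\top\|_2$. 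Combining with the bound in Theorem \ref{theroyfirst} delivers the claimed inequality. The case $k=1$ is handled directly by $f(\gamma)=\gamma-1$, which already satisfies $f(0)=-1$ and $f(1)=0$.

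For the first-order approximation, since $f(1)=0$ and $\lambda_i=1+\epsilon_i$, I would Taylor-expand $f(1+\epsilon_i)=f'(1)\epsilon_i+O(\epsilon_i^2)$ for $i\ge k$. Applying the product rule to $f$, the only term surviving at $\gamma=1$ is the one where $(\gamma-1)$ is differentiated, giving
\[
    f'(1)=(-1)^{k-1}\frac{1}{\prod_{i=1}^{k-1}\lambda_i}\prod_{i=1}^{k-1}(1-\lambda_i)=\pm\prod_{i=1}^{k-1}\frac{1-\lambda_i}{\lambda_i},
\]
where the sign $(-1)^{k-1}$ is immaterial inside a norm. Using the descending ordering $|\epsilon_i|\le|\epsilon_k|$ for $i\ge k$ and approximating each $|\epsilon_i|$ by $|\epsilon_k|$, the vector $(f(\lambda_k),\dots,f(\lambda_d))^\top$ has at most $n-k+1$ nonzero entries each of leading-order magnitude $|\epsilon_k f'(1)|$, which produces the stated expression $\epsilon_k\sqrt{n-k+1}\prod_{i=1}^{k-1}\frac{1-\lambda_i}{\lambda_i}$.

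The main obstacle is choosing the interpolating polynomial correctly: the normalization $f(0)=-1$ must be preserved exactly so that the coefficient vector is admissible in Theorem \ref{theroyfirst}, and the roots must be placed so that both the cluster center and the $k-1$ worst eigenvalues are annihilated simultaneously. Once that setup is in place, the residual inequality follows by the same route as Theorem \ref{theoremtwo}, and the first-order analysis is routine.
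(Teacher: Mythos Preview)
Your proposal is correct and follows essentially the same route as the paper: choose the degree-$k$ polynomial with roots at $1,\lambda_1,\dots,\lambda_{k-1}$ and constant term $-1$, feed its coefficient vector into the minimization of Theorem~\ref{theroyfirst} (the paper phrases this as invoking Theorem~\ref{theoremtwo} with the roots reinterpreted), note that the first $k-1$ components of $(f(\lambda_1),\dots,f(\lambda_d))$ vanish, and then Taylor-expand about $\gamma=1$ to extract the first-order term $f'(1)\epsilon_i$. Your explicit verification that $f(0)=-1$ and your product-rule computation of $f'(1)$ are exactly what the paper does (the paper routes the first-order step through the $Py_1$ formalism set up earlier in Section~3, but that is the same Taylor expansion in different notation).
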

\begin{proof}
Set \begin{equation*}
       f(\gamma)=(-1)^{k-1}\frac{\gamma-1}{\prod_{i=1}^{k-1} \lambda_i}\prod_{i=1}^{k-1}(\gamma-\lambda_i).
   \end{equation*}
At step $k$,
by Theorem\ref{theoremtwo} we obtain
\begin{align*}
    \|r_{k}\|_2&=  \|(b-Ax_{k})\|_2\\
  &\leq \|V_d\textsc{diag}[c_1, c_2, \dots, c_d]\|_2\|(f(\lambda_1), f(\lambda_2), \dots, f(\lambda_d))^\top\|_2.
\end{align*}
Since $f(\lambda_1), \dots,f(\lambda_{k-1})=0$, 
\begin{equation*}
    \|(f(\lambda_1), f(\lambda_2), \dots, f(\lambda_d)^\top\|_2=\|(f(\lambda_k), f(\lambda_{k+1}), \dots, f(\lambda_d))^\top\|_2.
\end{equation*}
\\
Then,
      \begin{align*}
   \|V_d\text{diag}[c_1, c_2, \dots, c_d]\|_2 \|\Lambda_\epsilon y-[1, 1, \dots, 1]^{\top}\|_2&\simeq \|V_d\text{diag}[c_1, c_2, \dots, c_d]\|_2 \|Py_1\|_2\\&<\|V_d\text{diag}[c_1, c_2, \dots, c_d]\|_2 \sqrt{n-k}\epsilon|f'(1)|\\
  &=\|V_d\text{diag}[c_1, c_2, \dots, c_d]\|_2\epsilon\sqrt{n-k}\;\Pi_{i=1}^k\frac{1-\lambda_i}{\lambda_i}.
\end{align*} 
\end{proof}   
Consider the case when the eigenvalues of $A$ have a clustered structure, where there are $s$ clusters, and each eigenvalue $\lambda_i\in \mathbb{C}, 1\leq i\leq d$ belongs to a cluster around a center $\gamma_j\in \mathbb{C}$ with a small radius $\epsilon$, $i.e.$ $\lambda_i=\gamma_j+\epsilon_i, 1\leq j\leq s, 0\leq |\epsilon_i|\leq\epsilon\ll 1,$ where $\epsilon\equiv\max(|\epsilon_1|, |\epsilon_2|, \dots,|\epsilon_d|)$, and $\gamma_1, \gamma_2, \dots, \gamma_s$ are distinct, i.e. $\gamma_{j_1}\neq\gamma_{j_2}$ if ${j_1}\neq {j_2}$. \par

\begin{lemma}
 For the case when there are many centers, assume $A$ has $s$ centers. Let the distance of an eigenvalue $\lambda_i$ from the cluster center $\gamma_{j(i)}$  be $\epsilon_i$ $(\epsilon_i=|\lambda_i-\gamma_{j(i)}|)$, where the eigenvalues are $\lambda_i, 1\leq i<d$ corresponding to the center $\gamma_j(i)$ with a descending order of $\epsilon_i$. Then, at the $k$th iteration $(k>s)$, 
\begin{align} 
      \|r_k\|_2\leq\|V_d\textsc{diag}[c_1, c_2, \dots, c_d]\|_2\|(f(\lambda_k), f(\lambda_{k+1}), \dots, f(\lambda_d))^\top\|_2,
\end{align}
where  \begin{equation*}
       f(\gamma)=(-1)^{k-1}\frac{1}{\prod_{i=1}^{k-s} \lambda_i\prod_{j=1}^s\gamma_j}\prod_{i=1}^{k-s}(\gamma-\lambda_i)\prod_{j=1}^s(\gamma-\gamma_j).
   \end{equation*}
  For $k=s$, $f(\gamma)=(-1)^s\frac{1}{\prod_{j=1}^{s}\gamma_j}\prod_{j=1}^s(\gamma-\gamma_j)$, and its 
     first order approximation in terms of $\epsilon_k$ is given by 
     \begin{equation}
         \epsilon_k\|V\textsc{diag}[c_1, c_2, \dots, c_d]\|_2\|(f'(\gamma_{j(1)}), f'(\gamma_{j(2)}), \dots,f'(\gamma_{j(d)}))^{\top}\|_2.\label{estimationcenter}
     \end{equation}
     or
      \begin{equation}\label{estimationcenter}
         \epsilon_k\|V\textsc{diag}[c_1, c_2, \dots, c_d]\|_2\|(f'(\gamma_{j(k)}), f'(\gamma_{j(k+1)}), \dots,f'(\gamma_{j(d)}))^{\top}\|_2,
     \end{equation}
\end{lemma}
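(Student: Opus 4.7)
The plan is to imitate the strategy of Theorem~\ref{theoremtwo} and Lemma~\ref{lemmaesecond}, now tailored to the presence of $s$ distinct cluster centers: construct a degree-$k$ polynomial $f$ whose roots absorb both the centers and the worst-perturbed eigenvalues, substitute it into the Vandermonde bound of Theorem~\ref{theroyfirst}, and then Taylor-expand around each center to harvest the $\mathcal{O}(\epsilon_k)$ factor.

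First I would exhibit the polynomial $f$ given in the statement. It is the unique degree-$k$ polynomial with simple roots at the $s$ centers $\gamma_1,\dots,\gamma_s$ together with the $k-s$ eigenvalues $\lambda_1,\dots,\lambda_{k-s}$ that are farthest from their respective centers (so that the descending ordering of the $\epsilon_i$ is exploited). A direct evaluation of $f(0)$, mirroring the sign bookkeeping in the proof of Theorem~\ref{theoremtwo}, gives $f(0)=(-1)^{k-1}(-1)^{k-s}(-1)^s=-1$, which certifies that the expansion $f(\gamma)=y_1^k\gamma^k+\cdots+y_1^1\gamma-1$ produces a coefficient vector $\boldsymbol{y_1}$ admissible in the minimization $\min_{y\in\mathbb{C}^k}\|\Lambda_d^k y-[1,\dots,1]^\top\|_2$ appearing in Theorem~\ref{theroyfirst}. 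Substituting $\boldsymbol{y_1}$ and invoking the fact that $f$ vanishes on the first $k-s$ eigenvalues by construction reduces the Vandermonde residual bound to
\begin{equation*}
\|r_k\|_2 \;\leq\; \|V_d\,\mathrm{diag}[c_1,\dots,c_d]\|_2 \,\bigl\|(f(\lambda_{k-s+1}),\dots,f(\lambda_d))^\top\bigr\|_2,
\end{equation*}
which is the displayed bound up to the indexing used for the truncated vector.

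For the first-order approximation, specifically in the edge case $k=s$ where no eigenvalue is used as a root of $f$, I would Taylor-expand each surviving component. Writing $\lambda_i=\gamma_{j(i)}+\epsilon_i$ and using $f(\gamma_{j(i)})=0$ yields $f(\lambda_i)=f'(\gamma_{j(i)})\epsilon_i+\mathcal{O}(\epsilon_i^2)$, exactly as in Theorem~\ref{theoremthree}. Assembling these into a vector and using the descending ordering of the $\epsilon_i$ allows one to factor out a single scalar $\epsilon_k$ and collect the remaining $\|(f'(\gamma_{j(k)}),\dots,f'(\gamma_{j(d)}))^\top\|_2$; the same bookkeeping, combined with the $k-s$ eigenvalue roots in the general $k>s$ case, produces the alternative form stated in the lemma.

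The main obstacle is the bookkeeping, not the analysis. One must verify that the cluster-assignment map $j(i)$ and the descending ordering of the $\epsilon_i$ interact so that the eigenvalues surviving root-removal in $f$ are precisely those indexed by the truncated vector in the statement, and that the factor $\epsilon_k$ can be pulled outside uniformly through $|\epsilon_i|/\epsilon_k\leq 1$ for the surviving indices. The Taylor step itself is the multi-center version of Theorem~\ref{theoremthree} and becomes straightforward once this indexing is set up; the remaining algebra — expressing each $f'(\gamma_{j(i)})$ in terms of the other centers and eigenvalue roots via the product-rule derivative of $f$ — is routine but tedious.
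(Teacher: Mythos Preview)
Your proposal is correct and follows essentially the same route as the paper: pick the degree-$k$ polynomial $f$ with roots at the $s$ centers (and, for $k>s$, the $k-s$ worst eigenvalues), feed its coefficient vector into the Vandermonde bound of Theorem~\ref{theroyfirst}, and then linearize around each center to extract the $\epsilon_k$ factor. The only cosmetic difference is that the paper packages the linearization through the explicit splitting $\Lambda_\epsilon\approx\Lambda_s+P$ and identifies $(Py_1)_i=f'(\gamma_{j(i)})\epsilon_i$, whereas you phrase the same first-order step as a direct Taylor expansion of $f(\lambda_i)$; your observation about the truncation index ($k-s+1$ versus $k$) is also accurate.
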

Note that, the estimation (\ref{estimationcenter}) contains only centers $\gamma_{j(i)}$ for which all $i$ belong to $j$ with $\epsilon_i\neq 0$. 
\begin{proof}
    Here, the $\Lambda_\epsilon$, $\widetilde{\Lambda}_k$ and $P$  were defined before Lemma\ref{lemmaesecond}.
    \begin{align*}
     \min_{y\in\mathbb{C}^k}\|\Lambda_\epsilon y-[1, 1, \dots, 1]^{\top}\|_2&\leq \|\Lambda_\epsilon y_1-[1, 1, \dots, 1]^{\top}\|_2\\ &\approx\|\Lambda_s y_1-[1, 1, \dots, 1]^{\top}+Py_1\|_2 \\&\leq \|\Lambda_s y_1-[1, 1, \dots, 1]^{\top}\|_2+\|Py_1\|_2\\ &= \|\widetilde{\Lambda}_s y_1-[1, 1, \dots, 1]^{\top}\|_2+\|Py_1\|_2=\|Py_1\|_2 \quad (k=s),
 \end{align*} 
 since $\widetilde{\Lambda}_k$ is nonsingular, because $\gamma_1, \gamma_2, \dots, \gamma_k$ are distinct.\\
Let $y_1=(y_1^1, y_1^2, \cdots, y_1^k)^{\top}$. Then,
\begin{equation*}
    (Py_1)_{i}=(ky_1^{k}\gamma_{j(i)}^{k-1}+(k-1)y_1^{k-1}\gamma_{j(i)}^{k-2}+\cdots+y_1^1)\epsilon_i,\qquad 1\leq i\leq d.
\end{equation*}
     
    \begin{equation*}
        f'(\gamma_{j(i)})=(-1)^{k-1}\frac{1}{\prod_{j=1}^k \gamma_j}\prod_{j=1,\cdots,k,\; j\neq j(i)}(\gamma_{j(i)}-\gamma_j),
    \end{equation*}
    \begin{equation*}
        f'(\gamma_{j(i)})=ky_1^{k}\gamma_{j(i)}^{k-1}+(k-1)y_1^{k-1}\gamma_{j(i)}^{k-2}+\cdots+y_1^1,
    \end{equation*}
    and 
    \begin{equation*}
        (Py_1)_i=f'(\gamma_{j(i)})\epsilon_i.
    \end{equation*}
    Thus, at step $k$, we have
\begin{align}
  \min_{\boldsymbol{x}_k=\boldsymbol{x_0}+\boldsymbol{z_k},\; \boldsymbol{z_k}\in\mathcal{K}_k(A,\boldsymbol{{r}_0})}  \|\boldsymbol{b}-Ax_k\|_2&\leq\|V\text{diag}[c_1, c_2, \dots, c_d]\|_2\min_{y\in\mathbb{C}^k}\|\Lambda_d^k y-[1,\dots,1]^{\top}\|_2 \nonumber\\ &\approx \|V\text{diag}[c_1, c_2, \dots, c_d]\|_2\|(f'(\gamma_{j(1)})\epsilon_1, f'(\gamma_{j(2)})\epsilon_2, \dots,f'(\gamma_{j(d)})\epsilon_d)^{\top}\|_2\nonumber\\
  &\leq\epsilon\|V\text{diag}[c_1, c_2, \dots, c_d]\|_2\|(f'(\gamma_{j(1)}), f'(\gamma_{j(2)}), \dots,f'(\gamma_{j(d)}))^{\top}\|_2.
\end{align}
\end{proof}
\section{Inner-iteration preconditioning}
\subsection{Inner-iteration by NR-SOR for BA-GMRES}
Hayami et al. \cite{Hayami10} proposed preconditioning the $m\times n$ rectangular matrix $A$ of the least squares problem 
\begin{equation}\label{eqstar}
    \min_{x\in\mathbb{R}^{n}}\|b-Ax\|_2,\qquad A\in \mathbb{R}^{m\times n},\qquad b\in \mathbb{R}^{m}
\end{equation}
 by an $n\times m$ rectangular matrix $B$ from the right or the left, and using the generalized minimal residual (GMRES) method \cite{saad1986} for solving the preconditioned least squares problems (AB-GMRES and BA-GMRES methods, respectively).
For ill-conditioned problems, AB-GMRES and BA-GMRES were shown to be more robust compared to the preconditioned CGNE and CGLS, respectively. Note here that BA-GMRES works with Krylov subspaces in $n$-dimensional space, whereas AB-GMRES works with Krylov subspaces in $m$-dimensional space. \par
The algorithm for BA-GMRES is given as follows. 
\begin{algorithm}
\caption{BA-GMRES}
\label{BA-GMRES method}
\begin{algorithmic}[1]
\State Choose $x_0\in \mathbb{R}^{n}$,\quad$r_0=b-Ax_0$,\quad, $w_0=Br_0$,\quad $\boldsymbol{v_1}=w_0/\|w_0\|_2$,
\For{$i=1,2,\dots,k$}
\State $w_i=BAv_i$,
\For{$j=1,2,\dots,i$}
\State $h_{i,j}=w_i^{{\top}}v_j$, \quad  $w_i=w_i-h_{j,i}v_j$,
\EndFor
\State $h_{i+1,i}=\|w_i\|_2$,  \quad $v_{i+1}=w_i/h_{i+1,i}$,
\State Compute $y_i\in \mathbb{R}^i$ which minimizes $\|w_i\|_2=\|\|w_0\|_2e_1-H_{i+1,i}y_i\|_2$,
\State $x_i=x_0+[\boldsymbol{v_1}, \boldsymbol{v_2}, \dots, v_i]y_i$, \qquad $r_i=b-Ax_i$.
\If{$\|A^{\top}r_i\|_2 < \epsilon\|A^{\top}r_0\|_2$}
\State stop
\EndIf
\EndFor
\end{algorithmic}
\label{ALbagmres}
\end{algorithm}
\par
The BA-GMRES method \cite{Hayami10}, applies GMRES to
\begin{equation}\label{BAGMRESEQ}
  BA\boldsymbol{x}=B\boldsymbol{b}, \qquad A\in \mathbb{R}^{m\times n},\qquad B\in \mathbb{R}^{n\times m},\qquad b\in \mathbb{R}^{m},
\end{equation}
and is equivalent to the original least squares problem $(\ref{eqstar})$
if and only if $\mathcal{R}(B^{\top}BA)=\mathcal{R}(A)$. \par
If we let $B=A^{\top}$, we have the normal equations
\begin{equation}\label{thesiseq415}
   A^{\top}A\boldsymbol{x}=A^{\top}\boldsymbol{b}.
\end{equation}
\par
One can precondition this system by an explicit matrix $P\in\mathbb{R}^{n\times n},$ which is given by
\begin{equation*}
    PA^{\top}A\boldsymbol{x}=PA^{\top}\boldsymbol{b}.
\end{equation*}
Forming an explicit matrix $P$ requires computation time and storage space, especially when there is a requirement to form the normal equation matrix $A^{\top}A$ explicitly.\par
Applying NR-SOR to the normal equations for $l$ steps, which avoids forming the normal equation matrix $A^{\top}A$ of (\ref{thesiseq415}) explicitly, is mathematically equivalent to providing a preconditioning matrix $P^{(l)}$ such that
\begin{equation*}
P^{(l)}A^{\top}A\boldsymbol{x}=P^{(l)}A^{\top}\boldsymbol{b}.
\end{equation*}\par
Introducing a stationary iteration method inside the GMRES iteration instead of forming an explicit preconditioning matrix to precondition GMRES, gives the inner-iteration preconditioned GMRES \cite{Morikuni2013,Morikuni15}. Morikuni \cite{Morikuni15} presents different stationary iterative methods combined with AB-GMRES and BA-GMRES and compares with other methods.\par
As other earlier work, we mention FGMRES \cite{saad1993flexible}, which is more related to AB-GMRES but applies different preconditioners at each step. SOR was used as an inner-iteration preconditioner with GCR \cite{abe2005variable}, and SOR as an inner-iteration preconditioner with GMRES \cite{delong1997sor, delong1998sor}.\par
Splitting the matrix $A^\top A$ into three parts gives
\begin{equation}
    A^\top A=D+L+U=M-N,
\end{equation}
where $D$ is the diagonal part, $L$ is the strictly lower triangular part, and $U$  is the strictly upper triangular part, respectively.
The NR-SOR method chooses
\begin{equation}
    M=\frac{1}{\omega}D+L,\qquad N=(\frac{1}{\omega}-1)D-U.
\end{equation}  
NR-SOR is equivalent to applying SOR to the normal equation
\begin{equation}
    A^{\mathsf{T}}A\boldsymbol{x}=A^{\mathsf{T}}\boldsymbol{b}
\end{equation}
as
\begin{equation}
    (M-N)\boldsymbol{x}=A^{\mathsf{T}}\boldsymbol{b}.
\end{equation}
The algorithm is given below.\par
Let $a_i$ be the $i$th column of $A$, $i=1,2,\dots,n$. 
Suppose $a_i\neq 0, i=1,2, \dots,n.$
\begin{algorithm}
\caption{NR-SOR}
\begin{algorithmic}[1]
\State Let $x^0$ be the initial solution and $r=b-Ax^0$, $0<\omega<2$.
\For{$k=1,2,\dots,l$}
\For{$i=1,2,\dots,n$}
\State $\delta_i=\omega(r,a_i)/||a_i||_2^2$,
\State $x^{k+1}_i=x^k_i+\delta_i$,
\State $r=r-\delta_ia_i$.
\EndFor
\EndFor
\end{algorithmic}
\label{ALNRSOR}
\end{algorithm}
\par
Note that, the computation of $||a_i||_2^2$ is done only once in the beginning.\par
Using NR-SOR as an inner-iteration preconditioner is a way of implicit preconditioning, but has an explicit form for theoretical analysis. In the NR-SOR, let $A^{\top}A=M-N$, when $M$ is nonsingular. Then, 
\begin{align*}
     P^{(l)}A^{\top}A&=(\sum_{i=1}^{l-1}(M^{-1}N)^i+\rm I\it)M^{-1}A^{\top}A\\&=(\sum_{i=1}^{l-1}(M^{-1}N)^i+\rm I\it)M^{-1}(M-N)\\&=(\sum_{i=1}^{l-1}(M^{-1}N)^i+\rm I\it)(\rm I\it-M^{-1}N)\\&=\sum_{i=1}^{l-1}(M^{-1}N)^i+\rm I\it-\sum_{i=1}^{l}(M^{-1}N)^i\\&=\rm I\it-(M^{-1}N)^l
     \\&=\rm I\it-H^l
\end{align*}
where $H=M^{-1}N$. Hence, the eigenvectors of $P^{(l)}A^{\top}A$ remain the same for $l=1, 2, \dots.$\par
The algorithm for using NR-SOR as inner-iteration preconditioner in BA-GMRES is as follows \cite{Morikuni2013,Morikuni15}.
\begin{algorithm}
\caption{NR-SOR inner-iteration BA-GMRES}
\label{NR-SOR inner-iteration BA-GMRES}
\begin{algorithmic}[1]
\State Choose $x_0\in \mathbb{R}^{n}$,\quad$r_0=b-Ax_0$,
\State apply $l$ steps SOR to $A^{\top}Aw=A^{\top}r_0$ to obtain $w_0=P^{l}A^{\top}r_0$, (NR-SOR),
\State $\boldsymbol{v_1}=w_0/\|w_0\|_2$,
\For{$i=1,2,\dots,k$}
\State $u_i=Av_i$,
\State apply $l$ steps SOR to $A^{\top}Aw=A^{\top}u_i$ to obtain $w_i=P^{l}A^{\top}u_i$, (NR-SOR),
\For{$j=1,2,\dots,i$}
\State $h_{i,j}=w_i^{{\top}}v_j$, \quad  $w_i=w_i-h_{j,i}v_j$,
\EndFor
\State $h_{i+1,i}=\|w_i\|_2$,  \quad $v_{i+1}=w_i/h_{i+1,i}$,
\State Compute $y_i\in \mathbb{R}^i$ which minimizes $\|w_i\|_2=\|\|w_0\|_2e_1-H_{i+1,i}y_i\|_2$,
\State $x_i=x_0+[\boldsymbol{v_1}, \boldsymbol{v_2}, \dots, v_i]y_i$, \qquad $r_i=b-Ax_i$.
\If{$\|A^{\top}r_i\|_2 < \epsilon\|A^{\top}r_0\|_2$}
\State stop
\EndIf
\EndFor
\end{algorithmic}
\label{ALnrsorbagmres}
\end{algorithm}
\subsection{Eigenvalue distribution for inner-iteration preconditioning}
We use a test matrix \cite{Morikuni15} to explain our analysis. Let
\begin{equation}\label{testeq450}
   A= U\left(
  \begin{array}{ccccccc}
    1 &1  &  &  &  &  & 0 \\
     &  & 0.9 &0.9  &  &  &  \\
     &  &  &  &\ddots  & \ddots &  \\
     &  &  &  &  &0.1  &0.1  \\
    0 &  &  &  &  &  &  \\
  \end{array}
\right) 
V^{\top}\in \mathbb{R}^{100\times 20}.
\end{equation}
where $U\in \mathbb{R}^{100\times 100}$ and $V\in\mathbb{R}^{20\times 20}$ are orthogonal matrices computed with the QR factorization of random
matrices. Thus, $A$ is rank-deficient, with rank 10. \par
The residual bound given in \cite{Morikuni15}, represented by the exponent of the spectral radius after taking the logarithm, closely resembles a straight line, providing a pessimistic estimate of the actual residual and failing to adequately explain the observed superlinear convergence .
\par
In order to analyze the convergence of BA-GMRES, in (\ref{eqfirst1}), let $A=B^{(l)}A=P^{(l)}A^{\top}A,\; \boldsymbol{b}=B^{(l)}b=P^{(l)}A^{\top}b.$ 
Figure \ref{cp4eigA} shows the nonzero singular values of $A$. Figure \ref{cp4eigATA} shows the nonzero eigenvalue of $A^{\top}A$. Figure \ref{cp4eigHcp4eigH} shows the nonzero eigenvalues of $H=M^{-1}N$. Figure \ref{cp4l4cp4l4} shows the eigenvalues of $B^{(l)}A=P^{(l)}A^{\top}A=\rm I\it-H^l,l=\rm4$. Figure \ref{cp4l8cp4l8} shows the eigenvalues of $B^{(l)}A=P^{(l)}A^{\top}A=\rm I\it-H^l,l=\rm8$.  Table \ref{tbeigen10020} gives the values for the above figures.
\par
\begin{figure}
  \centering
  \begin{tabular}{c}
  \includegraphics[width=12cm]{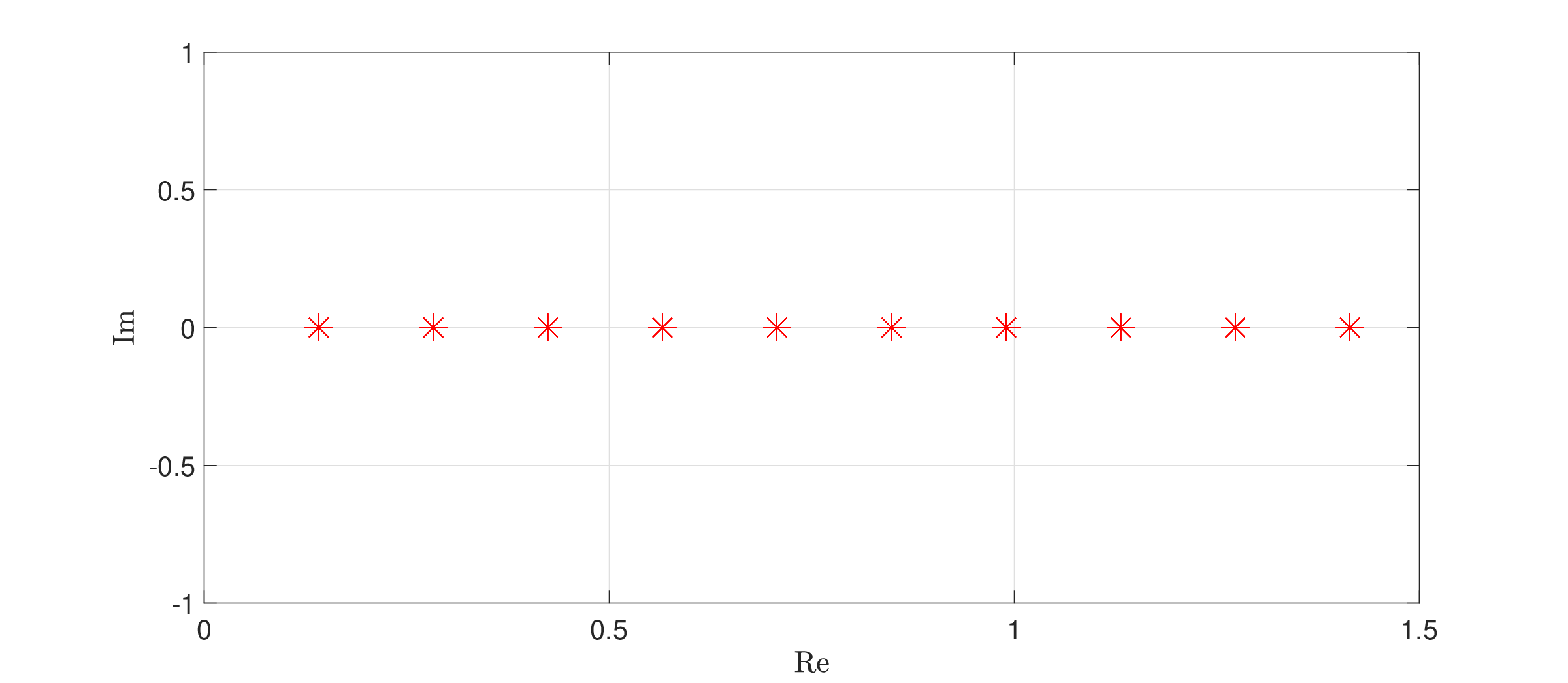}
\end{tabular}
\caption{The nonzero singular values of the test matrix $A$.}
  \label{cp4eigA}
 \end{figure}
\par
\begin{figure}
  \centering
  \begin{tabular}{c}
  \includegraphics[width=12cm]{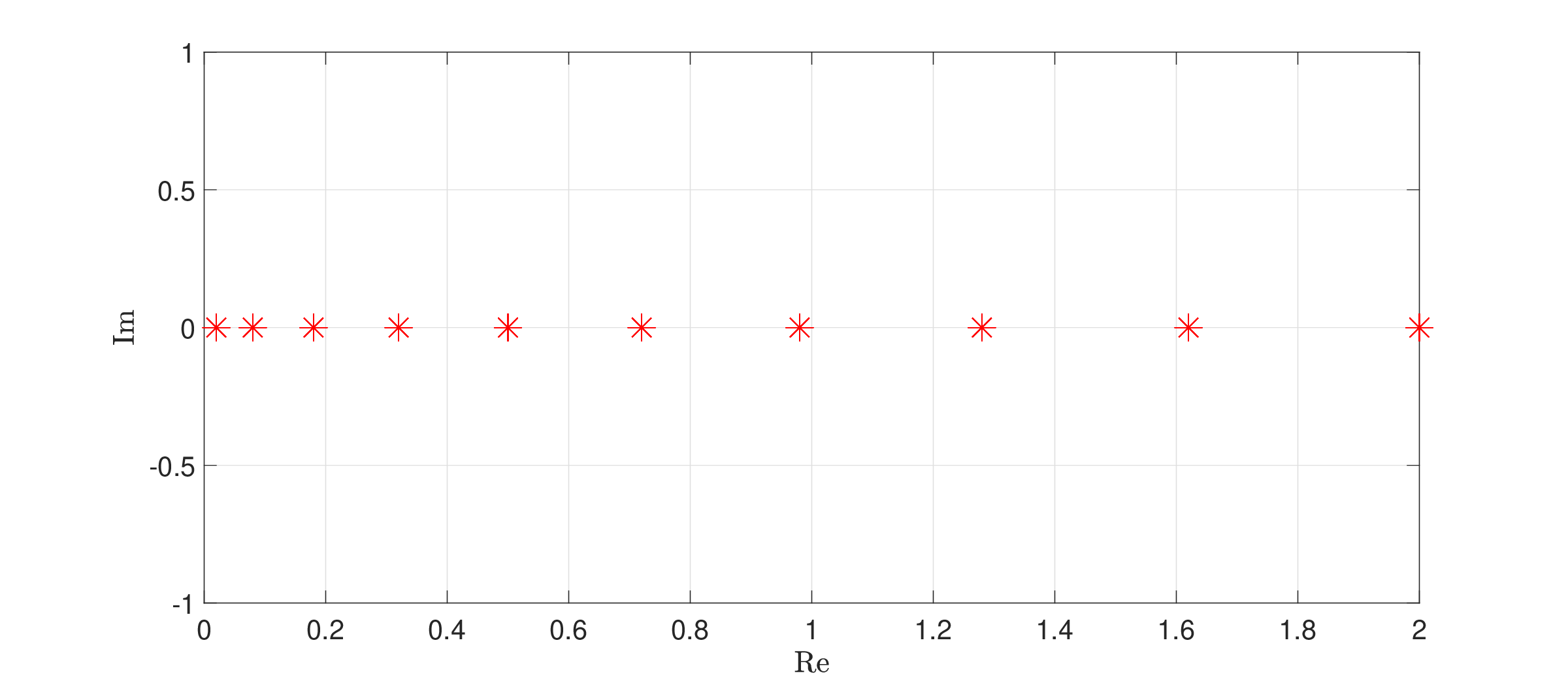}
\end{tabular}
  \caption{The nonzero eigenvalues of the normal equation matrix $A^\top A$ of the test matrix $A$.}
  \label{cp4eigATA}
 \end{figure}
\par
\begin{figure}
  \centering
  \begin{tabular}{c}
  \includegraphics[width=12cm]{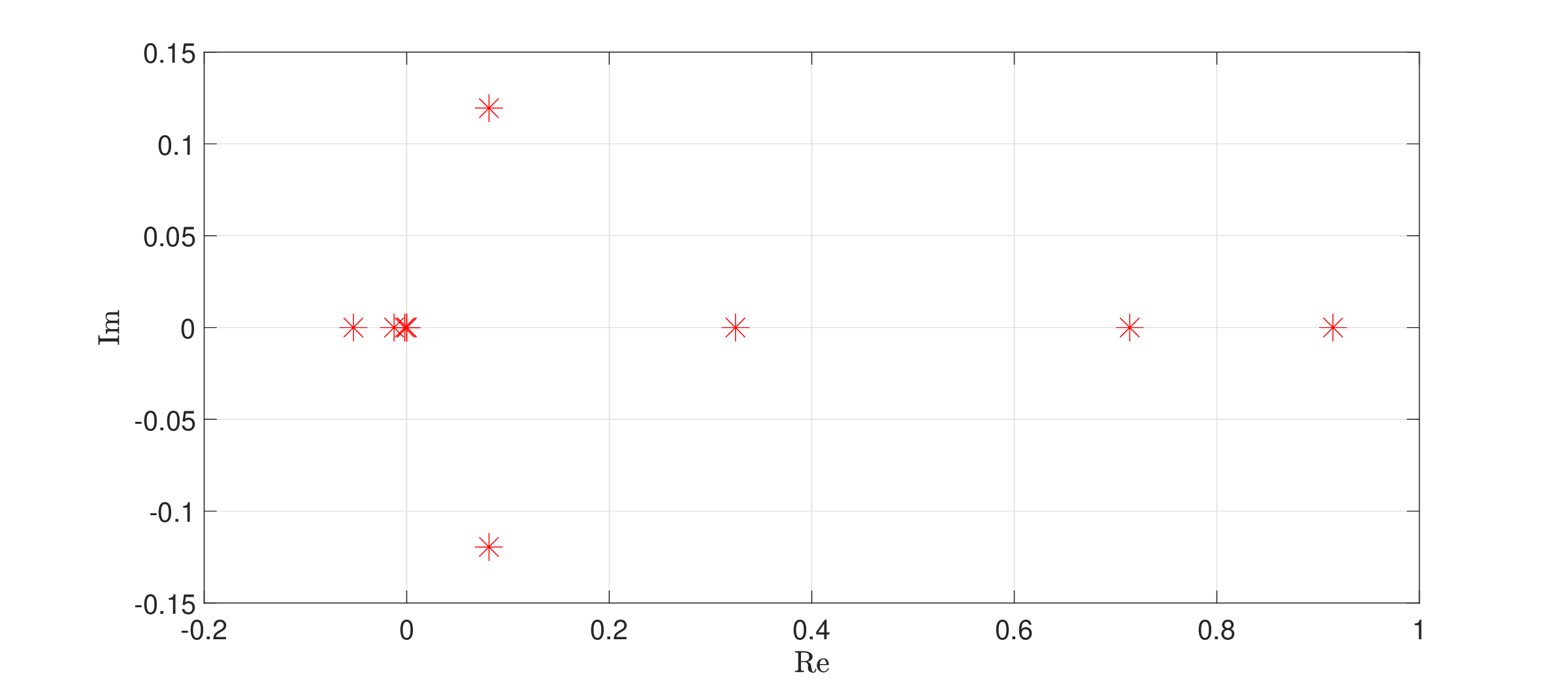}
\end{tabular}
  \caption{The nonzero eigenvalues of $H=M^{-1}N$ of the test matrix $A$.}
  \label{cp4eigHcp4eigH}
 \end{figure}
\par
\begin{figure}
  \centering
  \begin{tabular}{c}
  \includegraphics[width=12cm]{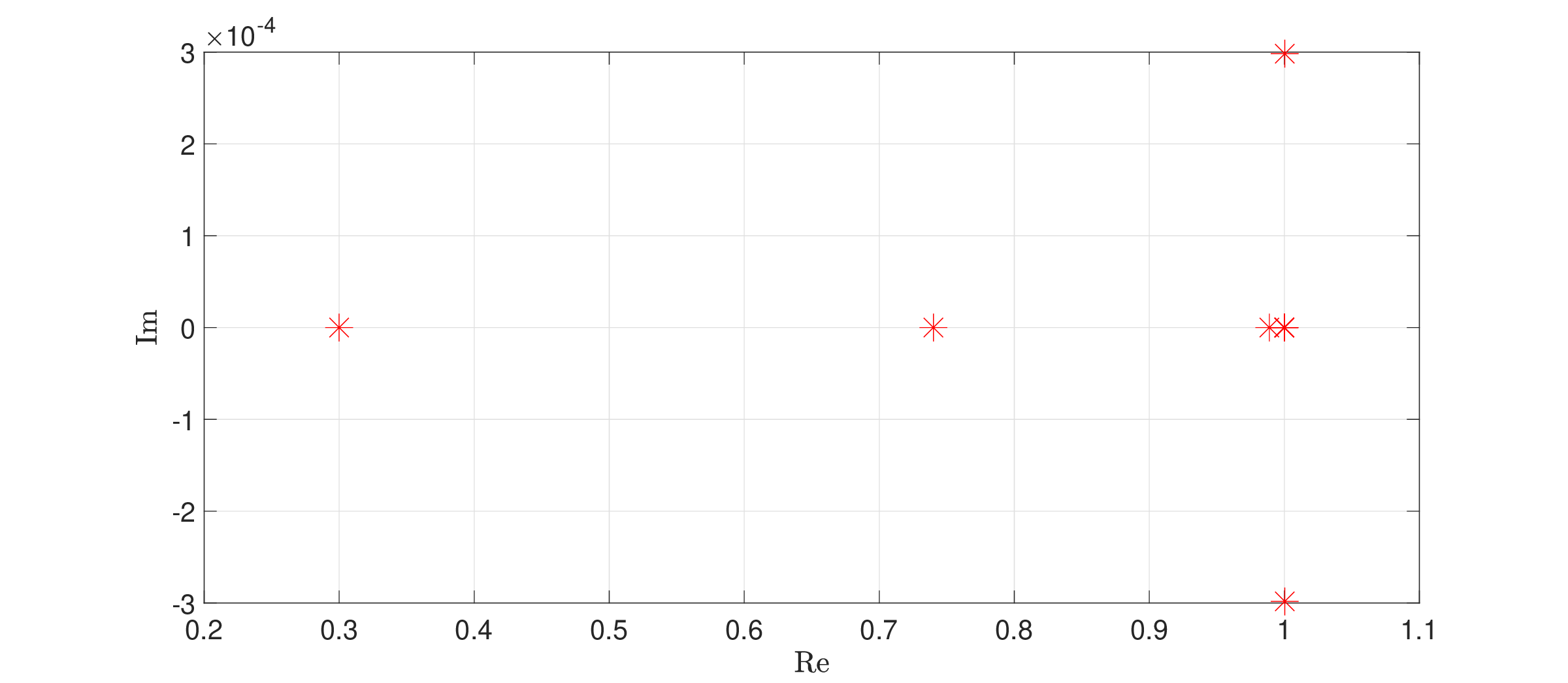}
\end{tabular}
  \caption{The nonzero eigenvalues of $B^{(l)}A=\rm I\it-H^l(l=\rm4)$ of the test matrix $A$.}
  \label{cp4l4cp4l4}
 \end{figure}
\par
\begin{figure}
  \centering
  \begin{tabular}{c}
  \includegraphics[width=12cm]{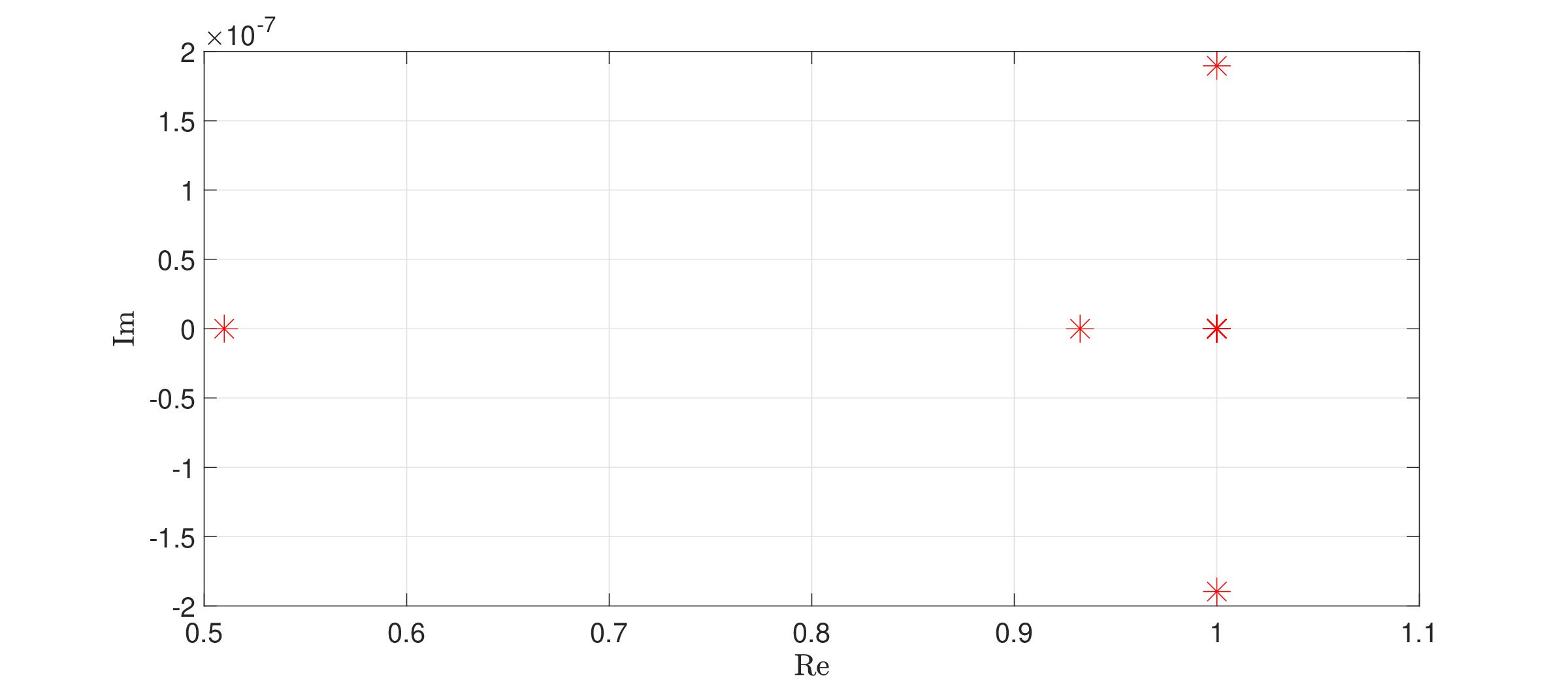}
\end{tabular}
  \caption{The nonzero eigenvalues of $B^{(l)}A=\rm I\it-H^l(l=\rm8)$ of the test matrix $A$.}
  \label{cp4l8cp4l8}
 \end{figure}
\par
\begin{table}[h]
\caption{The singular values of $A$,  eigenvalues of $A^{\top}A$, $H(M^{-1}N)$, and $B^{(l)}A=\rm I\it-H^l(l=\rm4, 8)$.}
    \centering
\begin{tabular}{c|rrrrr}
  \multicolumn{1}{c|}{} & \multicolumn{1}{c}{$A$}&\multicolumn{1}{c}{$A^{\top}A$}&$H=M^{-1}N$ & $B^{(l)}A=\rm I\it-H^l(l=\rm4)$& $B^{(l)}A=\rm I\it-H^l(l=\rm8)$\\
  \hline\rule{0pt}{12pt}
 1 & 1.41&  2.00& 0.00& 1.00& 1.00\\
 2 & 1.27&  1.62 & 0.00 & 1.00&1.00 \\
 3 & 1.31&  1.28& 0.00& 1.00&1.00 \\
  4 & 0.99&  0.98 &0.01& 1.00& 1.00\\
   5 & 0.85&  0.72&0.05 &1.00 &1.00 \\
   6 & 0.71&  0.50 &$0.08+0.12i$&$1.00+2.98\times 10^{-4}i$ &$1.00+1.90\times 10^{-7}i$ \\
    7 & 0.57&  0.32 &$0.08-0.12i$ &$1.00-2.98\times 10^{-4}i$& $1.00-1.90\times 10^{-7}i$\\
    8 & 0.42&  0.18&0.32&0.99 & 1.00 \\
    9 & 0.28&  0.08& 0.71&0.74 &0.93 \\
    10 & 0.14&  0.02&0.91 &0.30 &0.51 \\
\end{tabular}
 
    \label{tbeigen10020}
\end{table}

\subsection{Estimation for the test matrix}
As for the test matrix of (\ref{testeq450}), $A\in \mathbb{R}^{100\times 20}$, $A=B^{(l)}A=P^{l}A^{\top}A=\rm I\it-H^l$ has only one cluster of eigenvalues around the center 1, and the others are separate eigenvalues as shown in Figure \ref{cp4l4cp4l4} and \ref{cp4l8cp4l8} for $l=\rm 4, 8$. 
\begin{table}[h]
\caption{The eigenvalue distribution of $A=B^{(l)}A=\rm I\it-H^l(l=\rm 8)$}
\begin{center}
\begin{tabular}{r|rr}

  \multicolumn{1}{c|}{eigenvalues} & \multicolumn{1}{c}{structure}&\multicolumn{1}{c}{value} \\
  \hline\rule{0pt}{12pt}
$\lambda_1$ & $1+\epsilon_1$&  $1+8.00\times 10^{-15}$ \\
$\lambda_2$ & $1+\epsilon_2$&  $1+3.11\times 10^{-15}$  \\
$\lambda_3$ & $1+\epsilon_3$&  $1+2.44\times 10^{-15}$ \\
$\lambda_4$ & $1+\epsilon_4$&  $1+5.86\times 10^{-11}$ \\
$\lambda_5$ & 1&  1 \\
$\lambda_6$ &$\lambda_6$ & $1.00+1.90\times 10^{-7}i$  \\
$\lambda_7$ &$\lambda_7$&  $1.00-1.90\times 10^{-7}i$  \\
$\lambda_8$&$\lambda_8$ & 0.9999   \\
$\lambda_9$ &$\lambda_9$&  0.9325  \\
$\lambda_{10}$ &$\lambda_{10}$ & 0.5099  \\
\end{tabular}
\end{center}
\label{tbsetting}
\end{table}
Thus, according to Table \ref{tbsetting} where $d=10$ in (\ref{eqLambda}), we have as in (\ref{euqationdef61}) with $k=d=10$
\begin{equation*}\label{eqLambda1}
    \Lambda_{\epsilon}\equiv\Lambda_{10}^{10}=\left(
  \begin{array}{cccc}
   1+\epsilon_1  &(1+\epsilon_1)^2 & \cdots &(1+\epsilon_1)^{10}  \\
   1+\epsilon_2  &(1+\epsilon_2)^2 & \cdots &(1+\epsilon_2)^{10}  \\
   1+\epsilon_3  &(1+\epsilon_3)^2 & \cdots &(1+\epsilon_3)^{10}  \\
   1+\epsilon_4  &(1+\epsilon_4)^2 & \cdots &(1+\epsilon_4)^{10}  \\
  1&1 & \cdots &1\\
   \lambda_6  &\lambda_6^2 & \cdots &\lambda_6^{10}  \\
   \lambda_7  & \lambda_7^2 & \cdots &\lambda_7^{10}  \\
    \lambda_8  &\lambda_8^2 & \cdots &\lambda_8^{10}  \\
     \lambda_9  &\lambda_9^2 & \cdots &\lambda_9^{10}  \\
    \lambda_{10} & \lambda_{10}^2 & \cdots  & \lambda_{10}^{10} \\
  \end{array}
\right)\in \mathbb{C}^{10\times 10}.
\end{equation*}
For step $k<d$,

\begin{equation*}\label{eqLambda1}
    \Lambda_\epsilon=\left(
  \begin{array}{cccc}
   1+\epsilon_1  &(1+\epsilon_1)^2 & \cdots &(1+\epsilon_1)^{k}  \\
   1+\epsilon_2  &(1+\epsilon_2)^2 & \cdots &(1+\epsilon_2)^{k}  \\
   1+\epsilon_3  &(1+\epsilon_3)^2 & \cdots &(1+\epsilon_3)^{k}  \\
   1+\epsilon_4  &(1+\epsilon_4)^2 & \cdots &(1+\epsilon_4)^{k}  \\
  1&1 & \cdots &1\\
   \lambda_6 &\lambda_6^2 & \cdots &\lambda_6^k \\
   \lambda_7  & \lambda_7^2 & \cdots &\lambda_7^{k}  \\
    \lambda_8  &\lambda_8^2 & \cdots &\lambda_8^{k}  \\
     \lambda_9  &\lambda_9^2 & \cdots &\lambda_9^{k}  \\
    \lambda_{10} & \lambda_{10}^2 & \cdots  & \lambda_{10}^{k} \\
  \end{array}
\right)\in \mathbb{C}^{10\times k}.
\end{equation*}
Since, $\epsilon=\max_k|\epsilon_k|<10^{-10}\thinspace (k=1,2,3,4)$, which is tiny,

\begin{equation*}\label{eqLambda2}
    \Lambda_\epsilon\approx\widetilde{\Lambda_\epsilon}=\left(
  \begin{array}{cccc}
   1+\epsilon_1  &1+2\epsilon_1 & \cdots &1+k\epsilon_1  \\
   1+\epsilon_2  &1+2\epsilon_2 & \cdots &1+k\epsilon_2  \\
   1+\epsilon_3  &1+2\epsilon_3 & \cdots &1+k\epsilon_3  \\
   1+\epsilon_4  &1+2\epsilon_4 & \cdots &1+k\epsilon_4  \\
  1&1 & \cdots &1\\
   \lambda_6  &\lambda_6^2 & \cdots &\lambda_6^k \\
   \lambda_7  & \lambda_7^2 & \cdots &\lambda_7^{k}  \\
    \lambda_8  &\lambda_8^2 & \cdots &\lambda_8^{k}  \\
     \lambda_9  &\lambda_9^2 & \cdots &\lambda_9^{k}  \\
    \lambda_{10} & \lambda_{10}^2 & \cdots  & \lambda_{10}^{k} \\
  \end{array}
\right)\in \mathbb{C}^{10\times k}.
\end{equation*}
Seperating $\widetilde{\Lambda_\epsilon}$ into two matrices, $\widetilde{\Lambda_\epsilon}=\Lambda_s+P,$ where

\begin{equation*}\label{eqLambda3}
    \Lambda_s=\left(
  \begin{array}{cccc}
  1&1 & \cdots &1\\
  1&1 & \cdots &1\\
  1&1 & \cdots &1\\
    1&1 & \cdots &1\\
  1&1 & \cdots &1\\
   \lambda_6  &\lambda_6^2 & \cdots &\lambda_6^k \\
   \lambda_7  & \lambda_7^2 & \cdots &\lambda_7^{k}  \\
    \lambda_8  &\lambda_8^2 & \cdots &\lambda_8^{k}  \\
     \lambda_9  &\lambda_9^2 & \cdots &\lambda_9^{k}  \\
    \lambda_{10} & \lambda_{10}^2 & \cdots  & \lambda_{10}^{k} \\
  \end{array}
\right)\in \mathbb{C}^{10\times k},\qquad P=\left(
  \begin{array}{cccc}
   \epsilon_1  &2\epsilon_1 & \cdots &k\epsilon_1  \\
   \epsilon_2  &2\epsilon_2 & \cdots &k\epsilon_2  \\
   \epsilon_3  &2\epsilon_3 & \cdots &k\epsilon_3  \\
   \epsilon_4  &2\epsilon_4 & \cdots &k\epsilon_4  \\
  0  &0 & \cdots &0 \\
   0  &0 & \cdots &0 \\
   0  &0 & \cdots &0 \\
    0  &0 & \cdots &0 \\
     0  &0 & \cdots &0 \\
     0  &0 & \cdots &0 \\
  \end{array}
\right)\in \mathbb{C}^{10\times k}.
\end{equation*}

\begin{equation*}\label{eqLambda5}
    \widetilde{\Lambda_s}=\left(
  \begin{array}{cccc}
  1&1 & \cdots &1\\
   \lambda_6  &\lambda_6^2 & \cdots &\lambda_6^k \\
   \lambda_7  & \lambda_7^2 & \cdots &\lambda_7^{k}  \\
    \lambda_8  &\lambda_8^2 & \cdots &\lambda_8^{k}  \\
     \lambda_9  &\lambda_9^2 & \cdots &\lambda_9^{k}  \\
    \lambda_{10} & \lambda_{10}^2 & \cdots  & \lambda_{6}^{k} \\
  \end{array}
\right)\in \mathbb{C}^{6\times k}.
\end{equation*}\par
For $k=6$, we have
\begin{equation*}
    \text{det} \widetilde{\Lambda_{\it s}}=\prod_{6\leq i<j\leq 10}(\lambda_i-\lambda_j)\prod_{i=6}^{10}(\lambda_i-1).
\end{equation*}\par
Since $\lambda_i\neq\lambda_j\neq 1\neq 0$, $(6\leq i<j\leq 10)$,
\begin{equation*}
     \text{det} \widetilde{\Lambda_{\it s}}\neq 0 \Rightarrow \text{rank} \Lambda_s=6\quad (k=6).
\end{equation*}\par
Note 
\begin{equation*}
    \|V_d\text{diag}[c_1, c_2, \cdots, c_d](\Lambda_sy-[1,1,\cdots,1]^{\top})\|_2\leq\|V_d\text{diag}[c_1, c_2, \cdots, c_d]\|_2\|\Lambda_s y-[1,1,\cdots,1]^{\top}\|_2
\end{equation*}
holds in general.\par
$\|\widetilde{\Lambda_s}y-[1,1,\cdots,1]^{\top}\|_2=0$ and $\|\Lambda_s y-[1,1,\cdots,1]^{\top}\|_2=0$ share the same solution $y$ if $k=6$. \par
Note, rank$\Lambda_s\leq 6$ for $k\leq 6.$ rank$\Lambda_s=k\quad(1\leq k\leq 6)$,  
 rank$\Lambda_s=6\quad (k>6)$, if $\lambda_i\neq\lambda_j\neq 1\neq 0$, $(6\leq i<j\leq 10)$.\par
Let $y_1 =\arg \min_{y_\in \mathcal{R}^k} \|\widetilde{\Lambda_s} y_k-[1, 1, \dots, 1]^{\top}\|_2$\par
Note
 \begin{align*}
    \min_y \|\Lambda_\epsilon y-[1, 1, \dots, 1]^{\top}\|_2&\leq \|\Lambda_\epsilon y_1-[1, 1, \dots, 1]^{\top}\|_2\\ &\approx\|\Lambda_s y_1-[1, 1, \dots, 1]^{\top}+Py_1\|_2 \\&\leq \|\Lambda_s y_1-[1, 1, \dots, 1]^{\top}\|_2+\|Py_1\|_2\\ &= \|\widetilde{\Lambda_s} y_1-[1, 1, \dots, 1]^{\top}\|_2+\|Py_1\|_2 \quad (k=6)
    \\&= \|Py_1\|_2 \quad (k=6)
 \end{align*} 
Notice that when $k=6$, $\|\widetilde{\Lambda_s} y_1-[1, 1, \dots, 1]^{\top}\|_2=0.$
\begin{equation*}
    \|\Lambda_\epsilon y-[1, 1, \dots, 1]^{\top}\|_2\leq \|Py_1\|_2,
\end{equation*}
where $y_1=(y_1^1, y_1^2, \dots, y_1^6)^{\top}$, and
\begin{equation*}
    Py_1=\left(
  \begin{array}{c}
   \epsilon_1y_1^1+2\epsilon_1y_1^2+\dots+ 6\epsilon_1y_1^6  \\
 \epsilon_2y_1^1+2\epsilon_2y_1^2+\dots+ 6\epsilon_2y_1^6  \\
   \epsilon_3y_1^1+2\epsilon_3y_1^2+\dots+ 6\epsilon_3y_1^6  \\
 \epsilon_4y_1^1+2\epsilon_1y_4^2+\dots+ 6\epsilon_4y_1^6  \\
  0\\
  0\\
   0\\
    0\\
     0\\
    0\\
  \end{array}
\right).
\end{equation*}
Since

\begin{equation*}
    \|\widetilde{\Lambda_s} y_1-[1, 1, \dots, 1]^{\top}\|_2=0.
\end{equation*}
\begin{equation*}\label{squreroot}
    \left(
  \begin{array}{cccc}
  1&1 & \cdots &1\\
   \lambda_6  &\lambda_6^2 & \cdots &\lambda_6^6 \\
   \lambda_7  & \lambda_7^2 & \cdots &\lambda_7^{6}  \\
    \lambda_8  &\lambda_8^2 & \cdots &\lambda_8^{6}  \\
     \lambda_9  &\lambda_9^2 & \cdots &\lambda_9^{6}  \\
    \lambda_{10} & \lambda_{10}^2 & \cdots  & \lambda_{10}^{6} \\
  \end{array}
\right)\left(
  \begin{array}{c}
  y_1^1\\
     y_1^2\\
      y_1^3\\
      y_1^4\\
     y_1^5\\ 
       y_1^6\\
  \end{array}
\right)-\left(
  \begin{array}{c}
  1\\
     1\\
      1\\
      1\\
     1\\ 
       1\\
  \end{array}
\right)=\left(
  \begin{array}{c}
  0\\
     0\\
      0\\
      0\\
     0\\ 
       0\\
  \end{array}
\right),
\end{equation*}
    which means $1, \lambda_6, \lambda_7, \lambda_8, \lambda_9$ and $\lambda_{10}$ are roots of
    \begin{equation*}
        f(\gamma)=y_{1}^{6}\gamma^{6}+y_{1}^{5}\gamma^{5}+y_{1}^{4}\gamma^{4}+y_{1}^{3}\gamma^{3}+y_{1}^{2}\gamma^{2}+y_{1}^{1}\gamma-1=0.
    \end{equation*}
    Thus,
    \begin{equation*}
        f(\gamma)=-\frac{1}{\lambda_6\lambda_7\lambda_8\lambda_9\lambda_{10}}(\gamma-1)(\gamma-\lambda_6)(\gamma-\lambda_7)(\gamma-\lambda_8)(\gamma-\lambda_9)(\gamma-\lambda_{10})
    \end{equation*}
    and
      \begin{equation*}
        f'(1)=-\frac{1}{\lambda_6\lambda_7\lambda_8\lambda_9\lambda_{10}}(1-\lambda_6)(1-\lambda_7)(1-\lambda_8)(1-\lambda_9)(1-\lambda_{10}).
    \end{equation*}
    Also, we have
    \begin{equation*} f'(\gamma)=6y_{1}^{6}\gamma^{5}+5y_{1}^{5}\gamma^{4}+4y_{1}^{4}\gamma^{4}+3y_{1}^{3}\gamma^{2}+2y_{1}^{2}\gamma+y_{1}^{1}
    \end{equation*}
    and
     \begin{equation*}
        f'(1)=6y_{1}^{6}+5y_{1}^{5}+4y_{1}^{4}+3y_{1}^{3}+2y_{1}+y_{1}^{1}.
    \end{equation*}
     Let $\epsilon= \max \{\epsilon_1,\epsilon_2,\epsilon_3,\epsilon_4\}<10^{-10}$. Note that
    \begin{align*}
        \|Py_1\|_2&=\|(f'(1)\epsilon_1, f'(1)\epsilon_2 , f'(1)\epsilon_3, f'(1)\epsilon_4)^{\top}\|_2\\&\leq \epsilon\|(f'(1), f'(1), f'(1), f'(1))^{\top}\|_2\\&=2\epsilon|f'(1)|.
    \end{align*}
    Since $\|V_d\text{diag}[c_1, c_2, \dots, c_d]\|_2=2.5068$ \; ($V_d$ is computed by eigenvalue decomposition of $A$. Then, $\boldsymbol{b}$ is decomposed into components parallel to the column vectors $\boldsymbol{v_i} (i=1, 2, \dots, d.)$ of $V_d$ as $b=c_1\boldsymbol{v_1}+c_2\boldsymbol{v_2}+\cdots+c_d\boldsymbol{v_d}$). (\ref{eqVandermonde}) gives the final estimate at the $6$th iteration, as
\begin{align*}
  \| B^{(l)}r_s\|_2&=  \| B^{(l)}(b-Ax_s)\|_2\\
  &\leq \|V_d\text{diag}[c_1, c_2, \dots, c_d]\|_2 \|\Lambda_\epsilon y-[1, 1, \dots, 1]^{\top}\|_2\\&\simeq \|V_d\text{diag}[c_1, c_2, \dots, c_d]\|_2 \|Py_1\|_2\\&<\|V_d\text{diag}[c_1, c_2, \dots, c_d]\|_2 2\epsilon|f'(1)|\\
  &=2\|V_d\text{diag}[c_1, c_2, \dots, c_d]\|_2\epsilon\Big|-\frac{1}{\lambda_6\lambda_7\lambda_8\lambda_9\lambda_{10}}(1-\lambda_6)(1-\lambda_7)(1-\lambda_8)(1-\lambda_9)(1-\lambda_{10})\Big|\\
  &\leq 5.136\times 10^{-10}\times \frac{1}{0.4754}\times0.4901\times0.0675\times0.0001\times (1.8999\times 10^{-7})^2\\
  &=3.49\times 10^{-29}.
\end{align*}
If we choose $\lambda_6$ and $\lambda_7$ to be in the cluster around 1, then $\epsilon<10^{-6}$, and at the $4$th iteration we obtain $\| B^{(l)}r_s\|_2<3.49\times 10^{-12}.$
 \par
\begin{figure}
  \centering
  \begin{tabular}{c}
  \includegraphics[width=12cm]{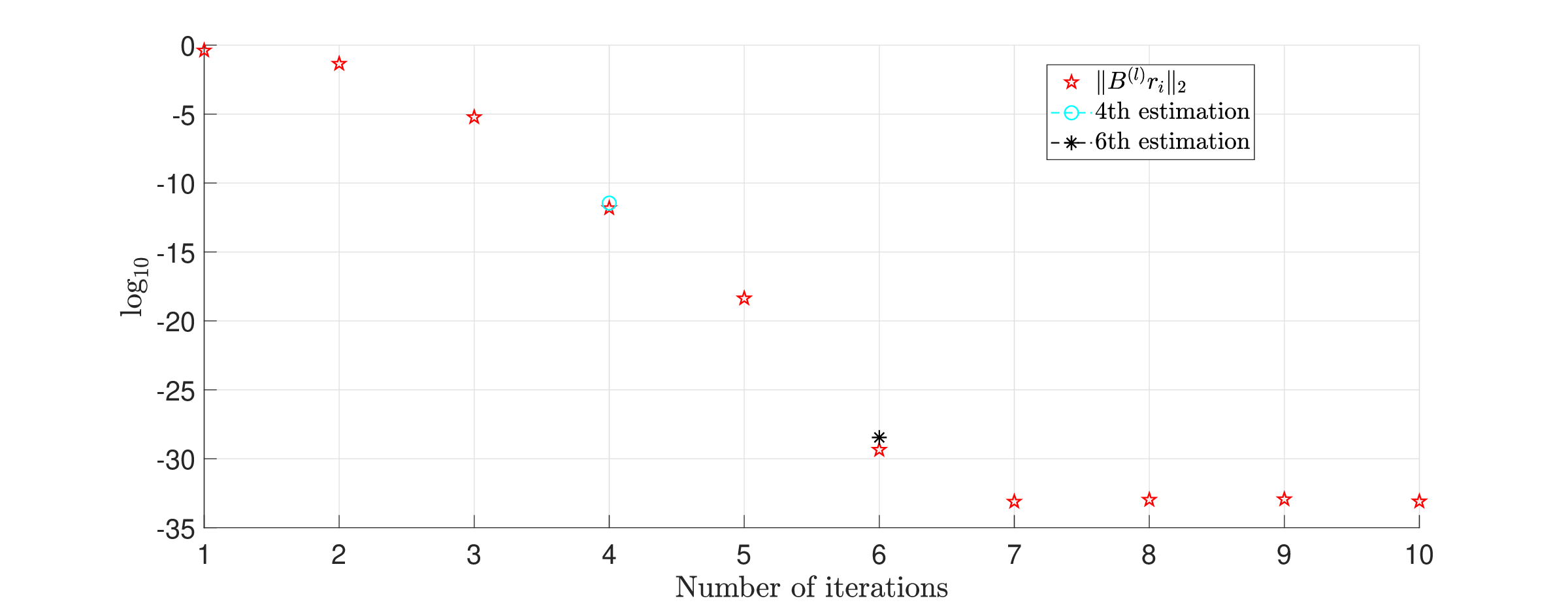}
\end{tabular}
  \caption{$\| B^{(l)}r_s\|_2 (l=8)$ versus the number of iterations for the test matrix $A$ in quadruple precision arithmetic.}
  \label{cp4quadruple}
 \end{figure}
 Figure \ref{cp4quadruple} shows $\| B^{(l)}r_s\|_2$ versus the number of iterations in quadruple precision arithmetic (in double precision arithmetic, the residual stagnation earlier). At the $4$th iteration $\| B^{(l)}r_s\|_2$ is approximately $10^{-12}$, and at the $6$th iteration $\| B^{(l)}r_s\|_2$ is approximately $10^{-29}$, which is close to the estimation. Thus, although $A$ has 10 different singular values, the eigenvalue of the preconditioned matrix $B^{l}A$ is contained in a cluster around 1. Within several steps,
$\| B^{(l)}r_s\|_2$ converges to a tiny level. In other words, the residual norm converges to near zero before the grade $d$.\par
Ipsen's upper bound in \cite{campbell1996gmres} for the non-normal matrix $B^{(l)}A$  gives $\| B^{(l)}r_6\|_2<c\epsilon\|r_0\|_2$, where c is a constant that reflects the distance from separate eigenvalues to the cluster center 1 which is smaller than 0.5 and also reflects the non-normality of $B^{(l)}A$ which is related to $\|V\|_2$, and $\|r_0\|_2=4.55$. Thus, the value of this bound is about $10^{-1}.$ Ipsen's estimation for normal\break matrices in \cite{ipsen2000expressions} gives $\| B^{(l)}r_6\|_2\approx (1/3)\times 0.7^5 \|r_0\|_2\approx0.0560\|r_0\|_2$, which is larger than our estimate. However, $B^{(l)}A$ is non-normal. Our work can be regarded as extending this estimate to the diagonalizable case. Bounds involving\break exponents of the spectral radius after taking $\log$ gives a straight line. Our paper is devoted to illustrating superlinear convergence.\par
Then, we applied our theory to a larger example with 1 step inner-iteration preconditioning to the matrix $A$$=\text{diag((1-1/exp((1:n)/4)))*gallery('orthog',n,1)}$ in Matlab, where $n=1, 001$, and $\text{gallery('orthog', \it n)}$ is a function generates an $n$ dimensional orthogonal matrix, where  we only multiply from the right, so that the matrix is nonsymmetric. The distribution of the eigenvalues after preconditioning is shown in Figure \ref{20241214B}. The corresponding numerical behavior is shown in Figure \ref{2024A}. The bound is pessimistic in the beginning due to the lack of enough centers $\gamma_j$'s to satisfy Theorem \ref{theoremthree}.\par
\begin{figure}
    \centering
    \includegraphics[width=0.7\linewidth]{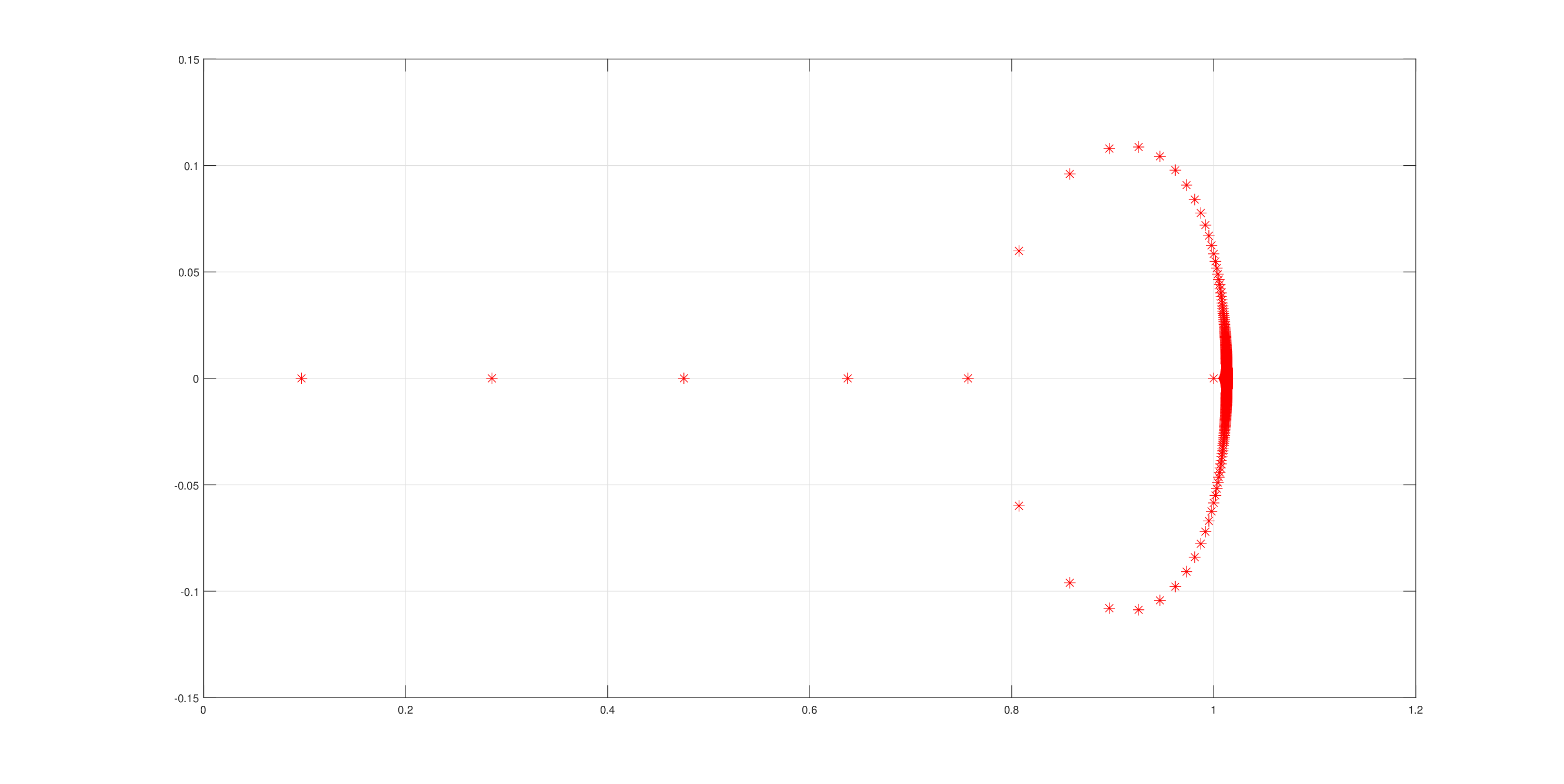}
    \caption{The eigenvalue distribution of $B^{(l)}A$.}
    \label{20241214B}
\end{figure}

\begin{figure}
    \centering
    \includegraphics[width=0.7\linewidth]{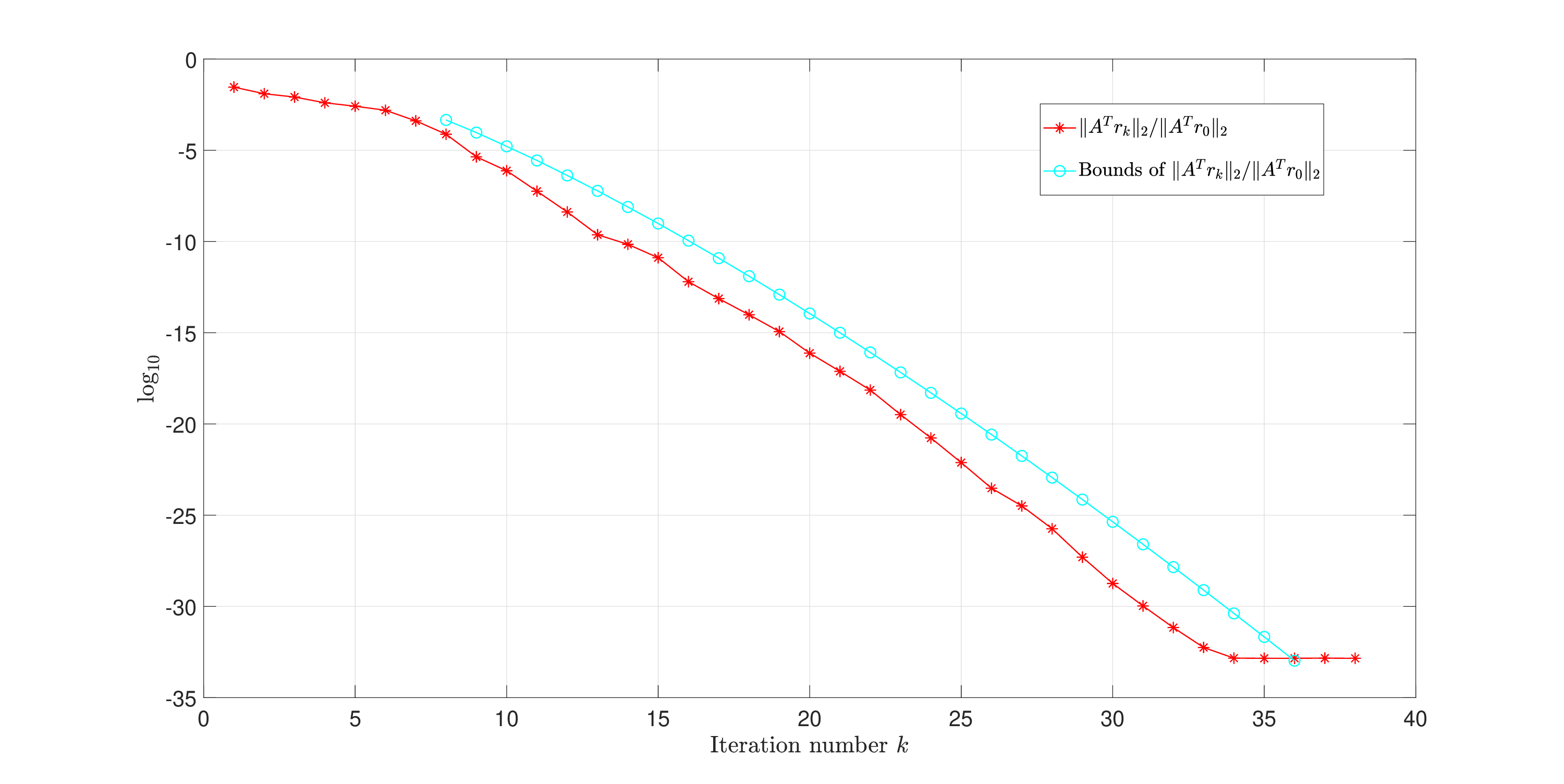}
    \caption{The residual norm and the corresponding residual bound for a larger example.}
    \label{2024A}
\end{figure}
Figure \ref{2024A} shows the case when the eigenvalues are well clustered. The residual converges very fast even though the condition number of $B^{(l)}A$ is $9.18\times 10^{17}$, and the error of the solution is $6.23\times 10^{-14}$. The number of iterations necessary for convergence is much smaller than the matrix size $1, 001$. The bound shows the same tendency as the residual, which shows that our theory can capture the superlinear convergence. \par
\begin{figure}
    \centering
    \includegraphics[width=0.7\linewidth]{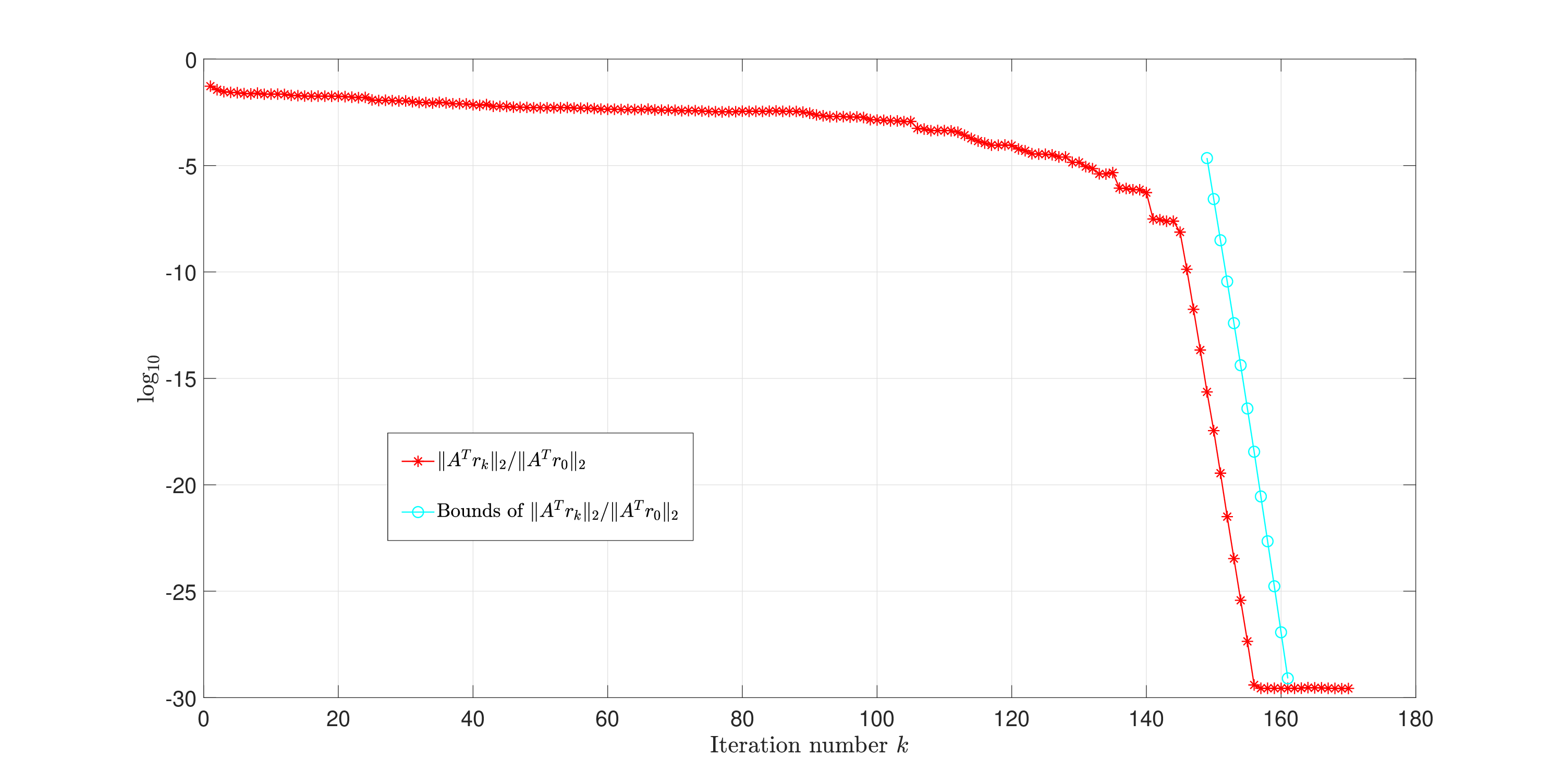}
    \caption{The residual norm and the corresponding residual bound, for Maragal$\_$3T.}
    \label{2025A}
\end{figure}
Next, we tested the Maragal3$\_$T matrix from \cite{florida} with $m=858,n=1,682$ and condition number $2.3484\times 10^{21}$. The result is shown in Figure \ref{2025A}. Here again, the bound is pessimistic in the beginning due to the same reason as above, but it captures the super convergence in the later stage. The bound and the relative residual have the same trend. They reach a very low level of $10^{-32}$ in the quadruple precision computation, although in the initial iterations the bound is more pessimistic compared to Figure \ref{2024A}. 
\section{INFLUENCE OF NON-NORMALITY}
To illustrate how a large condition number of the eigenvectors of $A$ slows down the convergence, especially when the eigenvalues are well
 clustered,
we use the example in \cite{Anne1996Any} by choosing the eigenvalues as $1$, $1.01$, and $1.001$. Then, the characteristic polynomial is given by
\begin{equation*}
    (\lambda-1)(\lambda-1.01)(\lambda-1.001)=\lambda^3-3.011\lambda^2+3.02201\lambda-1.01101,
\end{equation*} and the companion matrix of $A$ is given by
\begin{equation*}
     A^{B}= \left(
  \begin{array}{ccc}
    0 & 0 &  1.0110 \\
    1 & 0 &  -3.02201 \\
    0 &  1&  3.011 \\
  \end{array}
\right),
\end{equation*}
giving non-increasing residual series $\|r_0\|_2=1, \|r_1\|_2=0.99, \|r_2\|_2=0.98$. 
\begin{equation*}
    \boldsymbol{g}=(\sqrt{\|\boldsymbol{r_0}\|_2^2-\|\boldsymbol{r_1}\|_2^2}, \sqrt{\|\boldsymbol{r_1}\|_2^2-\|\boldsymbol{r_2}\|_2^2}, \sqrt{\|\boldsymbol{r_2}\|_2^2-\|\boldsymbol{r_3}\|_2^2})^{\top}=(0.1411,0.1404,0.98)^{\top},
\end{equation*}
where we use $\boldsymbol{g}$ as defined in \cite{Anne1996Any}.\par
Choosing $U$ to be the identity matrix,  $U*\boldsymbol{b}=\boldsymbol{g}$. Thus, $B=(\boldsymbol{b}, \boldsymbol{u_1},\dots,\boldsymbol{u_{n-1}})$ is given as follows.
\begin{equation*}
     B= \left(
  \begin{array}{ccc}
    0.1411 & 1 &  0 \\
    0.1404 & 0 &  1 \\
    0.98 &  0&  0 \\
  \end{array}
\right)
\end{equation*}
Then, we finally have
\begin{equation*}
    A=BA^BB^{-1}=\left(
  \begin{array}{ccc}
    0 & -2.8794 &  1.4329 \\
    1 & 3.1529 &  -0.5957 \\
    0&  0.9908&  -0.1419 \\
  \end{array}\right).
\end{equation*}\par
If we solve $A\boldsymbol{x}=\boldsymbol{b}$ by GMRES, the norm of $\boldsymbol{b}=\boldsymbol{f}(0)$ is $1$, the residual series is $0.99, 0.98.$ Hence, we have
\begin{equation*}
    V_d= \left(
  \begin{array}{ccc}
    0.7776 + 0.0000i  & 0.7776 + 0.0000i & -0.7771 + 0.0000i\\
  -0.4775 - 0.0030i & -0.4775 + 0.0030i &  0.4721 + 0.0000i\\
  -0.4090 + 0.0039i & -0.4090 - 0.0039i &  0.4163 + 0.0000i
  \end{array}
\right),\qquad \kappa(V_d)=8.3057\times 10^{3}.
\end{equation*}
\begin{equation*}
    (c_1, c_2, c_3)^{\top}=[(0.8547 + 1.5162i)\times 10^{3},(0.8547 - 1.5162i)\times 10^{3},(1.7103 - 0.0000i)\times 10^{3})]^{\top},
\end{equation*}
\begin{equation*}
  \|V_d\text{diag}[c_1, c_2, c_3]\|_2=2.9972\times 10^{3}.  
\end{equation*}\par
The Vandermonde part $\|\Lambda_d^1 y_1-[1, 1, \dots, 1]^{\top}\|_2$ in (\ref{eqVandermonde}) for the first iteration is $3.7103\times 10^{-2}$.
Hence, the bound for the first iteration is $\|V_d\text{diag}[c_1, c_2, \dots, c_d]\|\|\Lambda_d^1 \boldsymbol{y_1}-[1, 1, \dots, 1]^{\top}\|_2=1.1120\times 10^2$, which is very large compared to $\|r_1\|_2=0.99$.\par
The Vandermonde part $\|\Lambda_d^2 \boldsymbol{y_2}-[1, 1, \dots, 1]^{\top}\|_2$ in (\ref{eqVandermonde}) for the second iteration is $7.9480\times 10^{-4}$
Hence, the bound for the second iteration is$\|V_d\text{diag}[c_1, c_2, \dots, c_d]\|\|\Lambda_d^2 \boldsymbol{y_2}-[1, 1, \dots, 1]^{\top}\|_2=2.3822$, which is larger than $\|\boldsymbol{r_2}\|_2=0.98$, but the bound is tighter compared to the first iteration. This is because the Vandermonde part converges fast. This implies that the clustering of eigenvalues gives a superlinear bound, although how close the bound and the true convergence curve are, is influenced by $\|V_d\text{diag}[c_1, c_2, \cdots, c_d]\|_2$.\par Applying BA-GMRES to $Ax=b$ with $l$ steps NR-SOR inner-iteration preconditioning with the relaxation parameter $\omega=1.1$, we obtain $\kappa(V_d)=25.69$, $\|V_d\text{diag}[c_1, c_2,  c_3]\|_2=4.28$. As shown in Table \ref{tab:my_labelspecial}, with the increase of $l$, the bound of the residual decreases monotonically and approaches the actual residual. Then, the Vandermonde part of $P^{(5)}A$ for the second iteration is $3.9036\times 10^{-3}.$ Thus, the bound is $1.7079\times10^{-2}$, which is slightly larger than the actual residual $4.1286\times10^{-3}$. This shows that the inner-iteration preconditioning works for this extreme example. \par
\begin{table}[]
    \centering
    \begin{tabular}{c|c|c}
            & 1st Iter.&2nd Iter.\\\hline 
        $l=1$ (Actual) &  $7.0265\times 10^{-1}$& $6.8492\times 10^{-1}$\\
        $l=1$ (Bound)  & 4.2212 & 3.9239\\ \hline
        $l=2$ (Actual)&  $8.7823\times 10^{-1}$ & $8.7720\times 10^{-1}$\\
        $l=2$ (Bound) &  4.0204     &2.3155\\ \hline
        $l=3$ (Actual)&$8.8667\times 10^{-1}$   & $1.4074\times 10^{-1}$\\
        $l=3$ (Bound) & 3.8964 &$4.7470\times 10^{-1}$\\ \hline
          $l=4$ (Actual)&$8.7730\times 10^{-1}$   & $2.1902\times 10^{-2}$\\
        $l=4$ (Bound) & 3.7759 &$8.7574\times 10^{-2}$\\ \hline
         $l=5$ (Actual)&$8.6351\times 10^{-1}$   & $4.1286\times 10^{-3}$\\
        $l=5$ (Bound) & 3.6570 &$1.7079\times 10^{-2}$
    \end{tabular}
    \caption{The comparison of the actual residual $(\|B^{l}\boldsymbol{r_s}\|_2)$ and the bound for BA-GMRES with $l= 1, 2, 3, 4, 5$ steps NR-SOR inner iterations preconditioning. }
    \label{tab:my_labelspecial}
\end{table}
In the inner-iteration preconditioning of least squares problems, one starts with the normal equations so that the original matrix is a normal matrix. It seems that the NR-SOR inner-iterations do not harm the normality (well conditioning of the eigenvectors) a lot according to numerically experiments. The eigenvectors remain unchanged, while the eigenvalues cluster as the inner-iteration steps increases. Thus, the method maintains the condition number of eigenvectors, at the same time improving the distribution of the eigenvalues. That is why the method shows superlinear convergence. 
\section{Conclusions}
This paper was motivated by the superlinear convergence behavior of the inner-iteration preconditioned BA-GMRES. We built a general theory of convergence of GMRES based on the distribution of eigenvalues of the coefficient matrix, especially when they are clustered. Numerical experiments showed that our residual bound captures the sharp decrease of the residual, especially at the later stage of the convergence.  Our way of analyzing gives hints on how the preconditioning works. We will extend the work from the diagonalizable case to the non-diagonalizable case using the Jordan canonical form in the future.



\section*{Acknowledgments}
We would like to thank Dr. Keiichi Morikuni for useful discussions.



\section*{Conflict of interest}
The authors have no conflict of interest to declare.

\bibliography{wileyNJD-Vancouver}

\bibliographystyle{wileyNJD-Vancouver.bst}  

\end{document}